\newfont{\bb}{msbm10 at 11pt}
\newfont{\bbsmall}{msbm8 at 8pt}
\newtheorem{teorema}{Theorem}
\newtheorem{lema}{Lemma}
\newtheorem{corolario}{Corollary}
\newtheorem{definicion}{Definition}
\newtheorem{observacion}{Remark}
\newtheorem{ejemplo}{Example}
\newtheorem{claim}{Claim}
\def\t{\theta}
\def\lflecha{\longrightarrow}
\newcommand{\ep}{\varepsilon}
\def\cA{\mathcal{A}}
\def\cD{\mathcal{D}}
\def\cK{\mathcal{K}}
\def\cL{\mathcal{L}}
\def\cU{\mathcal{U}}
\def\cM{\mathcal{M}}
\def\cN{\mathcal{N}}
\def\cH{\mathcal{H}}
\def\cW{\mathcal{W}}
\def\war{M^2\times_f \R}
\def\Kext{K_{{\rm ext}}}
\let\hat=\widehat
\let\tilde=\widetilde
\let\landa=\lambda
\let\alfa=\alpha
\let\parc=\partial
\let\ep=\varepsilon
\def\landa{\lambda}
\def\flecha{\rightarrow}
\def\esiz{\langle}
\def\esde{\rangle}
\def\cte.{\mathop{\rm cte.}\nolimits}
 \def\Im{\mathop{\rm Im }\nolimits}
\def\cosh{\mathop{\rm cosh }\nolimits}
 \def\ext{\mathop{\rm ext }\nolimits}
\def\M{\mathbb{M}}
\def\Z{\mathbb{Z}}
\def\A{\mathbb{A}}
\def\R{\mathbb{R}}
\def\C{\mathbb{C}}
\def\D{\mathbb{D}}
\def\H{\mathbb{H}}
\def\S{\mathbb{S}}
\newcommand{\beq}{\begin{equation}}
\newcommand{\eeq}{\end{equation}}
\numberwithin{equation}{section}
\begin{document}

\begin{title}[Isolated singularities of graphs in warped products]{Isolated singularities of graphs in warped products and Monge-Ampère equations}
\end{title}
\today
\author{José A. Gálvez}
\address{José A. Gálvez, Departamento de Geometría y Topología,
Universidad de Granada, 18071 Granada, Spain}
 \email{jagalvez@ugr.es}

\author{Asun Jiménez}
\address{Asun Jiménez, Departamento de Geometria, IME,
Universidade Federal Fluminense, 24.020-140  Niterói, Brazil}
 \email{asunjg@vm.uff.br}

\author{Pablo Mira}
\address{Pablo Mira, Departamento de Matemática Aplicada y Estadística, Universidad Politécnica de
Cartagena, 30203 Cartagena, Murcia, Spain.}

\email{pablo.mira@upct.es}

\thanks{The authors were partially supported by
MICINN-FEDER, Grant No. MTM2013-43970-P, Junta de Andalucía Grant
No. FQM325 and Junta de Andalucía, reference P06-FQM-01642.}

\subjclass{35J96, 53C42}


\keywords{Isolated singularities, convex graphs, Monge-Ampère
equation, warped products, prescribed curvature.}

\begin{abstract}
We study graphs of positive extrinsic curvature with a non-removable
isolated singularity in $3$-dimensional warped product spaces, and
describe their behavior at the singularity in several natural
situations. We use Monge-Ampère equations to give a classification
of the surfaces in $3$-dimensional space forms which are embedded
around a non-removable isolated singularity and have a prescribed,
real analytic, positive extrinsic curvature function at every point.
Specifically, we prove that this space is in one-to-one
correspondence with the space of regular, analytic, strictly convex
Jordan curves in the $2$-dimensional sphere $\S^2$.
\end{abstract}
\maketitle
\section{Introduction}\label{intro}

This paper investigates the geometry of graphs of positive extrinsic
curvature (not necessarily constant) in three-dimensional warped
product spaces $M^2\times_f \R$ around a non-removable isolated
singularity of the graph. The aim here is twofold.

Our first objective is to describe from a purely geometric point of
view the behavior of a graph in the above conditions around an
isolated singularity. For instance, we will show that these graphs,
if bounded, extend continuously with bounded gradient to the
singularity. We will also prove that, in Hadamard manifolds, if the
graph is not bounded around the singularity, then the height
function of the graph has limit $\pm \8$ at the singular point.

Our second objective is to classify the embedded isolated
singularities of prescribed, analytic, positive extrinsic curvature
in $3$-dimensional space forms $\M^3(c)$ of constant curvature $c$.
For that purpose we will regard $\M^3(c)$ as a warped product
manifold and show that the prescribed curvature equation in warped
products is an elliptic equation of Monge-Ampère type. Then, we will
generalize some aspects of our study in \cite{GJM2} of isolated
singularities of Monge-Ampère equations in order to classify the
previous class of embedded isolated singularities in $\M^3(c)$ in
terms of the class of regular, analytic, strictly convex Jordan
curves in $\S^2$.

The first objective above will be carried out in Section
\ref{sec:1}, which can be read independently from the rest of the
paper. Specifically, in Section \ref{sec:1} we will prove the
following three results on the geometry of isolated singularities
for graphs of positive extrinsic curvature in warped product
three-manifolds.
\begin{teorema}\label{thap1}
Let $\Sigma$ be a graph in $\war$ with $\Kext>0$. Assume that
$\Sigma$ has an isolated singularity at $p_0\in M^2$ and is bounded
around $p_0$. Then $\Sigma$ extends across $p_0$ as a continuous
graph and is uniformly non-vertical.
\end{teorema}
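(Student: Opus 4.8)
The plan is to localize at $p_0$, exploit that $\Sigma$ is trapped between two horizontal slices and is locally strictly convex, and extract from these two facts both the continuous extension and a uniform bound on the slope. First I would work in a coordinate chart of $M^2$ around $p_0$ (geodesic normal coordinates, say, so the metric is comparable to the Euclidean one there), in which $\Sigma$ is the graph of a smooth function $u$ on a punctured ball $B\setminus\{0\}$. Since $\Sigma$ is bounded around $p_0$ it lies between two slices $M^2\times\{a\}$ and $M^2\times\{b\}$, so $a\le u\le b$ on $B\setminus\{0\}$. The hypothesis $\Kext>0$ means that the second fundamental form of $\Sigma$ is definite at every point, i.e. $\Sigma$ is locally strictly convex and $u$ solves an elliptic Monge-Amp\`ere equation on $B\setminus\{0\}$; after possibly reversing the orientation of the $\R$-factor we may take $u$ to be convex on every convex subset of $B\setminus\{0\}$, at least modulo lower-order terms carried by the ambient curvature, a point I return to below.

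The main point is to show that $u$ extends continuously across $0$. Set $M(\rho)=\max_{|x|=\rho}u$. Convexity of $u$ along radial segments gives $M(\rho)\le\max\{M(\rho_1),M(\rho_2)\}$ whenever $\rho_1<\rho<\rho_2$, and since $u$ is bounded this forces $M(\rho)$ to have a finite limit $M_0$ as $\rho\to0^+$. Suppose $u$ did not extend continuously across $0$; then $a':=\liminf_{x\to0}u(x)<M_0$, so I can choose $\rho_n\to0$, points $P_n$ with $|P_n|=\rho_n$ and $u(P_n)\to a'$, and points $\xi_n$ with $|\xi_n|=\rho_n$ and $u(\xi_n)=M(\rho_n)>\tfrac12(a'+M_0)$. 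Joining $P_n$ to $\xi_n$ by a chord of the circle $\{|x|=\rho_n\}$ — which avoids $0$ after an arbitrarily small perturbation excluding the antipodal case — the convexity of $u$ along that chord produces at $\xi_n$ a derivative in the direction away from $P_n$ of size $\gtrsim(M_0-a')/\rho_n$; since the gradient of a convex function is non-decreasing along a line, prolonging this segment outward until it reaches the circle $\{|x|=r/2\}$ forces $u$ there to exceed $M_0+c\,(M_0-a')/\rho_n$ for a fixed $c>0$, which tends to $+\infty$ and contradicts $u\le b$. Hence $u\to M_0$ at $0$ and extends to a convex solution of the Monge-Amp\`ere equation on all of $B$; equivalently, $\Sigma$ extends across $p_0$ as a continuous graph. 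I expect this to be the hardest step: it is the only place where boundedness and $\Kext>0$ are used together, it is the geometric form of the removability of an isolated singularity of a Monge-Amp\`ere equation in the spirit of \cite{GJM2}, and it is where the lower-order terms flagged above must be dealt with — in a curved ambient space the local convexity of $\Sigma$ becomes convexity of $u$ only up to corrections whose size a priori involves the slope of $u$, which is part of what is being proved. I would circumvent this by representing $\Sigma$ locally as a graph over a totally geodesic surface tangent to it, so that convexity is manifest, or by comparing $\Sigma$ directly with the umbilic slices that trap it through the tangency principle; with either device the chord argument above runs with only a harmless error term.

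Finally, for uniform non-verticality: a convex function on the ball $B$ is locally Lipschitz in the interior, so $|\nabla u|$ is bounded on $B_{r/2}(0)\setminus\{0\}$ by a constant depending only on $r$ and $b-a$. In $\war$ this is precisely the statement that the tangent planes of $\Sigma$ make an angle bounded away from $\pi/2$ with the vertical direction $\partial/\partial t$; that is, $\Sigma$ is uniformly non-vertical in a neighborhood of $p_0$. Together with the continuous extension, this proves the theorem.
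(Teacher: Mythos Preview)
Your overall strategy---prove continuous extension via convexity, then get the gradient bound from the local Lipschitz property of convex functions---is the reverse of the paper's, and the circularity you flag is a genuine gap that your proposed workarounds do not close.

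The difficulty is exactly the one you name: $K_{\rm ext}>0$ gives positive definiteness of $II$, which in local coordinates says that $D^2 z+\mathcal{A}(x,y,z,Dz)$ is definite, with $\mathcal{A}$ \emph{quadratic} in $Dz$ (see the coefficients in \eqref{curvwarp}). Along an arbitrary chord $\gamma$ one therefore only gets $w''\ge -C(1+|Dz|^2)$, not $w''\ge -C(1+(w')^2)$; the full gradient, not just the directional derivative, enters the error. So neither your maximum-on-circles monotonicity nor the propagation of a large slope along a chord is justified without already knowing $|Dz|$ is bounded. Your first fix (graphing over a totally geodesic surface tangent to $\Sigma$) would have to be redone at every point and in any case such surfaces need not exist in a general warped product; your second fix (tangency with umbilic slices) does not, as stated, yield a gradient bound. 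Likewise, the final step invoking ``local Lipschitz for convex functions'' presupposes Euclidean convexity of $u$, which you have not established.

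The paper avoids the circularity entirely by bounding the gradient \emph{first}. It takes semi-geodesic coordinates $(u,v)$ on $M^2$ based at $p_0$ so that $g=du^2+G(u,v)\,dv^2$ and each line $v={\rm const}$ is a geodesic. The point is that $II(\bar\partial_u,\bar\partial_u)$ then involves only $z_u$, not $z_v$; positive definiteness of $II$ reduces along these geodesics to
\[
\frac{\partial}{\partial u}\!\left(\frac{z_u}{f(z)}\right)>\frac{f'(z)}{2f(z)}\ \ \text{(or }<\text{)},
\]
and since $z$ is bounded the right side is bounded below by a constant $c_0$. Hence $u\mapsto z_u/f(z)-c_0u$ is monotone on each $u$-interval, which immediately bounds $z_u$. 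Rotating the semi-geodesic frame gives a bound on $z_v$, and then \eqref{eqa3} yields uniform non-verticality; the continuous extension is the standard consequence of a bounded gradient on a punctured disk. The key idea you are missing is this choice of coordinates that decouples the directional second fundamental form from the transverse derivative, turning the problem into a one-variable monotonicity statement rather than a convexity argument.
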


\begin{teorema}\label{thap2}
Let $\Sigma$ be a graph with $\Kext>0$ in the Riemannian product
space $M^2\times \R$ (i.e. $f=1$). Assume that $\Sigma$ has an
isolated singularity at $p_0\in M^2$. Then $\Sigma$ extends across
$p_0$ as a continuous graph and is uniformly non-vertical.
\end{teorema}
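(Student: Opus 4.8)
The plan is to deduce this statement from Theorem \ref{thap1} by supplying the one missing ingredient: \emph{in the product case $f=1$ a graph $\Sigma$ with $\Kext>0$ and an isolated singularity at $p_0$ is automatically bounded around $p_0$}. Once that is established, Theorem \ref{thap1} applies verbatim and gives both that $\Sigma$ extends across $p_0$ as a continuous graph and that it is uniformly non-vertical. So the entire content of the argument is the boundedness.

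To prove boundedness I would first use the product structure to convert $\Kext>0$ into a convexity property of the height function. Writing $\Sigma$ as the graph of a smooth $u$ over a punctured coordinate disk $D^{*}=D_\rho(p_0)\setminus\{p_0\}\subset M^2$, the upward unit normal is $N=W^{-1}(-\nabla u,1)$ with $W=\sqrt{1+|\nabla u|^{2}}$, and since the slices $M^2\times\{t\}$ are totally geodesic one computes $\mathrm{II}=W^{-1}\,\mathrm{Hess}_{M^2}u$. Hence $\Kext>0$ is equivalent to $\mathrm{Hess}_{M^2}u$ being a definite bilinear form on $D^{*}$, and after replacing $u$ by $-u$ if necessary (the reflection $t\mapsto -t$ is an isometry of the product preserving $\Kext$) we may assume $\mathrm{Hess}_{M^2}u>0$ on $D^{*}$, i.e.\ $u$ is strictly convex along every geodesic. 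This is exactly the point where the product hypothesis is used: for a general warping function the formula for $\mathrm{II}$ acquires extra lower-order terms in $u$ and $\nabla u$, so $\Kext>0$ no longer forces $\mathrm{Hess}\,u$ to be definite (this is why Theorem \ref{thap1} must instead assume boundedness).

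With $\mathrm{Hess}_{M^2}u>0$ in hand I would run the classical Euclidean two-sided comparison, adapted to the punctured disk, after shrinking $\rho$ so that metric balls about $p_0$ in $D_{4\rho}(p_0)$ are convex with strictly convex geodesic circles and $\Sigma$ is a graph over $D_{4\rho}(p_0)\setminus\{p_0\}$. Upper bound: for $z\in D_{2\rho}^{*}$ the geodesic through $z$ orthogonal to the radial geodesic $p_0z$ is tangent to the geodesic circle of radius $d(p_0,z)$, hence stays outside it and in particular misses $p_0$; extending it until it leaves $D_{2\rho}$ on both sides of $z$ and using convexity of $u$ along it gives $u(z)\le\max_{\partial D_{2\rho}}u<\infty$. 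Lower bound: fixing two regular points $q_1,q_2\in D_{2\rho}^{*}$ whose radial directions from $p_0$ are not opposite, for every $z$ the minimizing geodesic from at least one $q_i$ to $z$ misses $p_0$, and the supporting inequality $u(z)\ge u(q_i)+d(q_i,z)\,\langle\nabla u(q_i),\dot\gamma(0)\rangle$ along that geodesic bounds $u$ below on $D_{2\rho}^{*}$. Thus $u$ is bounded, $\Sigma$ lies in a horizontal slab near $p_0$, and Theorem \ref{thap1} finishes the proof. The only real subtlety I anticipate is bookkeeping on the punctured disk — choosing the comparison geodesics so that they avoid $p_0$ while still trapping $z$ strictly between their endpoints — together with the identity $\mathrm{II}=W^{-1}\mathrm{Hess}_{M^2}u$, which is precisely what separates the product case from the general warped-product situation.
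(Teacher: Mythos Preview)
Your argument is correct, but it takes a longer road than the paper's. Both proofs rest on the same key observation: when $f\equiv 1$ the second fundamental form of the graph is a positive multiple of $\mathrm{Hess}_{M^2}u$, so $\Kext>0$ forces the Hessian to be definite. You exploit this convexity \emph{to bound the height function}: comparison along geodesics that avoid the puncture traps $u$ between finite upper and lower bounds, and then you invoke Theorem~\ref{thap1} as a black box. The paper instead uses the same definiteness \emph{to bound the gradient directly}: in the paper's Fermi coordinates $(u,v)$ the condition $f=1$ reduces \eqref{eqa1} to $z_{uu}\neq 0$, so $z_{uu}$ has constant sign, $z_u$ is monotone in $u$ on the punctured rectangle and hence bounded; rotating the coordinate direction bounds $z_v$ as well. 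The remaining steps (angle-function estimate, continuous extension) are then exactly the concluding lines of the proof of Theorem~\ref{thap1}. In short, the paper bypasses the height bound entirely and goes straight to the gradient bound in two lines, whereas you first bound the height via geodesic comparison and then let Theorem~\ref{thap1} recover the gradient bound for you. Your route has the minor advantage of a clean logical reduction to Theorem~\ref{thap1} as a statement; the paper's route is considerably shorter.
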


\begin{teorema}\label{thap3}
Let $\Sigma$ be a graph with $\Kext>0$ in a warped product space
$\war$ which is a Hadamard manifold. Assume that $\Sigma$ has an
isolated singularity at $p_0\in M^2$ and that $\Sigma$ is \emph{not}
bounded around $p_0$. Then, if $z$ denotes the height function of
$\Sigma$, $$\lim_{p\to p_0} z(p)= \pm \8, \hspace{1cm} p\in M^2.$$
\end{teorema}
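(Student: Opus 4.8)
The plan is to pass from $\Sigma$ to a two--dimensional statement about the sub--level sets of its graphing function. Write $\Sigma$ as the graph of $u\colon D^*\to\R$ over the punctured disk $D^*:=B(p_0,\delta)\setminus\{p_0\}\subset M^2$, so that $z=u$. Since $\Kext>0$, the second fundamental form of $\Sigma$ is definite, hence $\Sigma$ is locally strictly convex in $\war$ and near each of its points bounds a convex body of $\war$ on one of its two sides. As the metric of $\war$ is invariant under $t\mapsto -t$, after a reflection we may assume that this is the \emph{upper} side; then for every $t\in\R$ the set $\{p\in D^*:u(p)\le t\}$, being the intersection of a locally convex region of $\war$ with the totally geodesic slice $M^2\times\{t\}$, is a locally convex subset of $M^2$, and the same holds for $\{u<t\}$. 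Under this normalization it suffices to prove that $u\to+\8$ at $p_0$; the original statement follows by undoing the reflection.

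Next I would note that, since $\Sigma$ is not bounded around $p_0$, $u$ is unbounded on every punctured ball about $p_0$, and that with the above normalization this forces $\limsup_{p\to p_0}u(p)=+\8$: if $u$ stayed bounded above near $p_0$ while $\liminf_{p\to p_0}u=-\8$, the convex body $\cK$ locally bounded by $\Sigma$ would, after passing to closures, contain the entire vertical geodesic over $p_0$, and being also horizontally bounded this is incompatible with $\Sigma$ being a graph over the \emph{full} punctured disk (vertical geodesics over points near $p_0$ would then meet $\partial\cK$ twice). Now suppose, for a contradiction, that $u\not\to+\8$ at $p_0$, so $\ell:=\liminf_{p\to p_0}u(p)$ is finite. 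Then for every $t>\ell$ the set $\Omega_t:=\{p\in D^*:u(p)<t\}$ is open and locally convex, $p_0\in\partial\Omega_t$ (its closure meets $p_0$ because $\ell<t$, and the closure of its complement meets $p_0$ because $\limsup_{p\to p_0}u=+\8$), the $\Omega_t$ increase with $t$, and $\bigcup_{t>\ell}\Omega_t=D^*$.

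The crux of the proof, and the step I expect to be the main obstacle, is the following local fact: the \emph{tangent cone of $\Omega_t$ at $p_0$} --- the set $\Theta_t\subset S^1_{p_0}\subset T_{p_0}M^2$ of directions along which $\Omega_t$ accumulates at $p_0$ --- is a closed arc of length at most $\pi$. The mechanism is that $\Omega_t$ is convex along the short geodesics of $D^*$ joining points close to $p_0$: given $v,w\in\Theta_t$ with $v\neq -w$, and points $a_n,b_n\in\Omega_t$ tending to $p_0$ along directions tending to $v$ and $w$ with $d(a_n,p_0)\asymp d(b_n,p_0)$, the minimizing geodesic from $a_n$ to $b_n$ is unique and stays in $D^*$ --- because $B(p_0,\delta)$ is a convex ball of a Hadamard manifold and $a_n,b_n$ are not antipodal through $p_0$ --- it remains in $\Omega_t$ by local convexity of the slice $\{u<t\}$, and it stays inside $B(p_0,\max(d(a_n,p_0),d(b_n,p_0)))$ because $d(\cdot,p_0)$ is convex; letting $n\to\8$ it sweeps out the short geodesic arc from $v$ to $w$ inside $\Theta_t$. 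Hence $\Theta_t$ is geodesically convex in $S^1_{p_0}$; using a geodesic triangle with vertices in $\Omega_t$ surrounding $p_0$ one rules out $\Theta_t=S^1_{p_0}$ (that would put $p_0$ in the interior of $\overline{\Omega_t}$), so $\Theta_t$ is a proper geodesically convex subset of the circle, i.e.\ a closed arc of length $\le\pi$. This is exactly where the Hadamard hypothesis is essential: it makes the radial/angular picture around $p_0$ well behaved and guarantees that the only obstruction to joining two nearby points of $D^*$ inside $D^*$ is their being antipodal with respect to $p_0$.

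Granting this, the proof concludes quickly. The arcs $\Theta_t$ increase with $t$, so $\overline{\bigcup_{t>\ell}\Theta_t}$ is again a closed arc of length $\le\pi$; on the other hand $\bigcup_{t>\ell}\Omega_t=D^*$ forces $\overline{\bigcup_{t>\ell}\Theta_t}$ to contain the tangent cone of $D^*$ at $p_0$, which is all of $S^1_{p_0}$ --- an arc of length $2\pi\neq\pi$. This contradiction proves $u\to+\8$, whence $\lim_{p\to p_0}z(p)=\pm\8$. An equivalent way to organize the argument is to work directly in $\war$ with the convex body $\cK$ that $\Sigma$ locally bounds: one shows, using $\liminf u=\ell$ and $\limsup u=+\8$, that $\cK$ contains the vertical ray $\{p_0\}\times[\ell,+\8)$ \emph{in its boundary}, and that a convex body whose boundary contains a geodesic ray cannot contain, arbitrarily close to that ray, loops that link it --- whereas $\Sigma$, being a graph over the full punctured disk, supplies exactly such loops. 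The main obstacle in either version is this convexity--and--linking fact; the remaining points (the reflection normalization, the identity $\limsup_{p\to p_0}u=+\8$, and the precise local convexity of $\Omega_t$ near $p_0$) are routine but have to be checked.
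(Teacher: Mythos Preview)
Your argument has two concrete errors that break the approach as written.

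First, the slices $M^2\times\{t\}$ are \emph{not} totally geodesic in a warped product $M^2\times_f\R$ unless $f'\equiv 0$; they are only totally umbilic (a horizontal $M^2$-geodesic $(\gamma(s),t_0)$ has acceleration $-\tfrac{f'(t_0)}{2}\partial_t$ in $\war$). Hence your claim that the sub-level sets $\{u\le t\}$ are locally convex in $M^2$ does not follow from local convexity of the epigraph in $\war$, and in fact it can fail. In the horospherical model of $\H^3$, i.e.\ $f(t)=e^{-2t}$, the function $u(x,y)=\tfrac{1}{2}\log(R^2-x^2-y^2)$ has convex epigraph in $\H^3$ (its graph is a totally geodesic plane, so the epigraph is a half-space), yet $\{u\le t_0\}=\{x^2+y^2\ge R^2-e^{2t_0}\}$ is the complement of a disk, not convex. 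This example has $\Kext=0$, but it shows the mechanism you invoke is wrong; you give no argument for why $\Kext>0$ would rescue it, and the second-fundamental-form inequality $f(z)z_{uu}>f'(z)z_u^2+\tfrac{1}{2}f(z)f'(z)$ (equation~\eqref{eqa1} in the paper) gives no convexity of $u$ along $M^2$-geodesics when $f'<0$.

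Second, your normalization and direction are inconsistent. The map $t\mapsto -t$ is not an isometry of $\war$ unless $f$ is even, so your ``reflection'' is illegitimate; more importantly, under the correct normalization (second fundamental form positive definite for the \emph{upward} normal, so the epigraph is convex) the conclusion is $u\to -\infty$, not $u\to +\infty$. The horosphere example in the paper ($u(x,y)=\log(R-\sqrt{R^2-x^2-y^2})$, convex side up, $u\to -\infty$ at the origin) illustrates this. Accordingly your claim $\limsup_{p\to p_0}u=+\infty$ is false in that normalization, and the argument you give for it --- that the convex body would otherwise contain the whole vertical line over $p_0$, forcing nearby vertical lines to meet $\partial\cK$ twice --- does not work: the convex body here is the (vertically unbounded) epigraph, and containing $\{p_0\}\times\R$ in its closure is exactly what happens in the genuine unbounded case. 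The paper's proof embraces this: it shows directly that $\overline{\mathrm{epi}(u)}$ is convex in $\war$ using the totally geodesic \emph{vertical} planes $\gamma\times\R$, then analyzes $\overline{\mathrm{epi}(u)}\cap(\{p_0\}\times\R)$ and proves it must equal $\{p_0\}\times\R$ (whence $u\to -\infty$) rather than a half-line $\{p_0\}\times[h,\infty)$ (which would force $u\to h$, contradicting unboundedness). Your 3D ``alternative'' at the end is closer to this line, but as stated it still has the sign backwards and is only a sketch.
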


In Sections \ref{newsec3} and \ref{secgrad}, as a preparation for
our main result in Section \ref{sec:2}, we will study the behavior
of solutions to the general elliptic equation of Monge-Ampère type

 \begin{equation}\label{eq00}
{\rm det }( D^2 z + \mathcal{A}(x,y,z,Dz)) =\varphi(x,y,z,D z)>0
 \end{equation}
around a non-removable isolated singularity, where $\cA(x,y,z,Dz)\in
\cM_2 (\R)$ is a symmetric matrix. Again, these two sections can be
read independently from the rest of the paper. We will show that
when $\cA, \varphi$ are real analytic, the graph $z=z(x,y)$ of any
solution to \eqref{eq00} can be analytically parametrized as an
embedding defined on an annulus, and so that this parametrization
extends analytically to the boundary circle of the annulus which the
parametrization collapses to the singularity (Lemma
\ref{extanalitica}). Also, we will show (Theorem \ref{regama}) that
the limit gradient at the singularity is a regular analytic Jordan
curve in $\R^2$ whose curvature does not change sign. These results
extend to equation \eqref{eq00} some facts proved in \cite{GJM2} for
the \emph{pure} Monge-Ampère equation, i.e. for the case $\cA=0$.
Other results on isolated singularities of elliptic Monge-Ampère
equations can be found in
\cite{ACG,Bey1,Bey2,CaLi,GMM,GJM2,JiXi,Jor,ScWa}.

In Section \ref{sec:2} we give the classification of the surfaces in
a space form $\M^3(c)$ that are embedded around an isolated
singularity, and whose extrinsic curvature at each point is
predetermined by a given positive, analytic function $\cK$ defined
on a neighborhood of the singular point in $\R^3$ (see Theorem
\ref{teo: embedded}). Let us note that, by the Gauss equation
\beq\label{ecgauss} K_G =K_{\rm ext}+c,\eeq prescribing the
extrinsic curvature $K_{\ext}$ on a surface in a space form
$\M^3(c)$ of constant curvature $c$ is equivalent to prescribing the
(intrinsic) Gaussian curvature $K_G$ of the surface.

Theorem \ref{teo: embedded} extends the analogous classification
result proved by the authors in \cite[Theorem 4]{GJM2} for the case
$\M^3(c)=\R^3$. However, the extension of the classification to
$\H^3$ and $\S^3$ is non-trivial. Indeed, while in $\R^3$ the
extrinsic curvature $K_{\text{ext}}$ of a graph $z=z(x,y)$ is given
by the \emph{pure} Monge-Ampère equation \beq\label{pure}
z_{xx}z_{yy}-z_{xy}^2=\varphi(x,y,z,z_x,z_y),\eeq where
$\varphi(x,y,z,z_x,z_y)=K_{\text{ext}}(1+z_x^2+z_y^2)^2$, in $\H^3$
and $\S^3$ the prescribed extrinsic curvature equation has the
general form \eqref{eq00}, i.e. $\cA\neq 0$. This makes our analytic
study of isolated singularities of Monge-Ampère equations in
\cite{GJM2} insufficient for our geometric purposes here, and
justifies the need of the analytic results of Sections \ref{newsec3}
and \ref{secgrad}. Theorem \ref{teo: embedded} also extends results
proved in \cite{GHM,GaMi} for surfaces of constant curvature.

\section{Isolated singularities of graphs in warped products}\label{sec:1}

We devote this section to the proof of Theorems \ref{thap1},
\ref{thap2} and \ref{thap3} stated in the introduction.

Given a Riemannian surface $(M^2,g)$ and a smooth function
$f:\R\flecha (0,\8)$, we define the \emph{three-dimensional warped
product} $M^2\times_f \R$ as the Riemannian manifold $ (M^2\times
\R,\esiz,\esde)$, where
$$\esiz,\esde=f(t)g+dt^2. $$ A surface $\Sigma$ in $M^2\times_f \R$ is a
\emph{graph} if $\pi:\Sigma\flecha \pi (\Sigma)$ is a
diffeomorphism, where $\pi$ stands for the projection $M^2\times_f
\R\flecha M^2$. If we choose coordinates $(x,y)$ on a domain of
$M^2$ that contains $\pi (\Sigma)$, then the graph $\Sigma$ in the
coordinates $(x,y,t)$ is given by $t=z(x,y)$, where $z(x,y)$ is a
smooth function. We will call $z$ the \emph{height function} of the
graph.

\begin{definicion}\label{sing}
Let $\Sigma$ be a smooth graph in $\war$ over a punctured disk
$\cD^*\subset M^2$ around some $p_0\in M^2$. If $\Sigma$ does not
extend as a $C^1$ graph to $\cD^*\cup \{p_0\}$, we will call $p_0$
an \emph{isolated singularity} of $\Sigma$.
\end{definicion}

In what follows we will denote by $\Kext$  the \emph{extrinsic
curvature function} of the oriented graph $\Sigma$, i.e. the
determinant of the second fundamental form $II$ of $\Sigma$ with
respect to its first fundamental form: $K_{\ext}= {\rm det} (II) /
{\rm det} (I)$. Then, the condition $\Kext
>0$ is equivalent to the property that $II$ is (positive or
negative) definite at every point.

Given a graph $\Sigma\subset \war$, we can orient it so that its
unit normal $N$ satisfies $\esiz N,\parc_t\esde \in (0,1]$, where
$t$ is the vertical coordinate in $\war$. We will call $\nu:=\esiz
N,\parc_t\esde$ the \emph{angle function} associated to $\Sigma$,
and we will say that $\Sigma$ is \emph{uniformly non-vertical} if
$\nu\geq c>0$ in $\Sigma$.

As a preparation for the proofs of Theorems \ref{thap1}, \ref{thap2}
and \ref{thap3}, we consider next a special type of local
coordinates $(u,v,t)$ on a warped product space $M^2\times_f \R$.

Let $(M^2,g)$ be an oriented Riemannian surface and $p_0\in M^2$.
For a fixed unit vector $\xi\in T_{p_0} M$, let $\gamma(v)$ be the
unique geodesic in $M^2$ with initial conditions $\gamma(0)=p_0$,
$\gamma'(0)= \xi$. Let $\exp$ and $J$ denote, respectively, the
exponential map and the complex structure of $(M^2,g)$. Then, for
$\ep>0$ small enough, the map $$(u,v)\longmapsto \exp_{\gamma(v)} (u
J\gamma'(v))$$ defines a diffeomorphism from $R_{\ep} :=
(-\ep,\ep)\times (-\ep,\ep)$ into a neighborhood $U\subset M^2$ of
$p_0$, such that the metric $g$ is expressed with respect to $(u,v)$
as $$g= du^2 + G(u,v) dv^2$$ for a positive smooth function $G(u,v)$
in $R_{\ep}$ with $G(0,v)=1$ for all $v\in (-\ep,\ep)$.

Observe that in these coordinates, each curve $v={\rm const.}$ in
$R_{\ep}$  corresponds to a geodesic of $(M^2,g)$.

Let now $\Sigma$ be a graph in $\war$ with an isolated singularity
at $p_0$. If we parametrize $U\times \R$ in terms of the $(u,v,t)$
coordinates defined above, then $\Sigma$ is written in a
neighborhood of $p_0$ as $$\Sigma=\{ (u,v,z(u,v)) : (u,v)\in
\Omega\subset R_{\ep}\setminus \{(0,0)\}\}$$ for some smooth
function $z$ defined on a punctured disk $\Omega$ centered at the
origin.

Consider $\bar{\parc}_u, \bar{\parc}_v$  the tangent coordinate
frame in $\Sigma$ with respect to $(u,v)$, i.e.
$$\bar{\parc}_u = \parc_u + z_u \parc_t,\hspace{1cm}
\bar{\parc}_v=\parc_v + z_v \parc_t,$$ and
 let $\eta$ be the (non-unit) upwards pointing normal vector field to
$\Sigma$ $$\eta:= -z_u \parc_u - \frac{z_v}{G}\parc_v + f \parc_t.$$
Then, bearing in mind that the Levi-Civita connection $\nabla$ in
$\war$ in the coordinates $(u,v,t)$ satisfies $$\nabla_{\parc_u}
\parc_u = -\frac{f'(t)}{2} \parc_t, \hspace{1cm} \nabla_{\parc_u}
\parc_t =\frac{f'(t)}{2 f(t)} \parc_u, \hspace{1cm} \nabla_{\parc_t}
\parc_t =0,$$ a simple computation shows that the second fundamental
form $II$ of $\Sigma$ verifies

$$\def\arraystretch{1.5}\begin{array}{lll}
II(\bar{\parc}_u,\bar{\parc}_u) &=& \esiz \nabla_{\bar{\parc}_u}
\bar{\parc}_u ,\frac{\eta}{||\eta||}\esde \\ & = &
\frac{1}{||\eta||} \left(-f'(z) z_u^2 + f(z) \left(z_{uu} -
\frac{f'(z)}{2}\right)\right).\end{array}$$ If we assume now that
$\Kext
>0$ for $\Sigma$, then
 \begin{equation}\label{eqa1}
-f'(z) z_u^2 + f(z) \left(z_{uu}- \frac{f'(z)}{2} \right) \neq 0
 \end{equation}
for every $(u,v)\in \Omega$. Alternatively, we can rewrite
\eqref{eqa1} as
 \begin{equation}\label{eqa2}
\frac{\parc}{\parc u} \left(\frac{z_u}{f(z)}\right)>\frac{f'(z)}{2
f(z)} \hspace{0.5cm} \text{or else} \hspace{0.5cm}
\frac{\parc}{\parc u} \left(\frac{z_u}{f(z)}\right)<\frac{f'(z)}{2
f(z)}.
\end{equation}

\vspace{0.2cm}

\emph{Proof of Theorem \ref{thap1}:} We assume, for instance, that
the first inequality in \eqref{eqa2} holds (the argument is similar
with the second inequality). As $z$ is bounded by hypothesis, there
is some $c_0\in \R$ such that, for $(u,v)\in \Omega\subset
R_{\ep}\setminus\{(0,0)\}$, $$\frac{\parc}{\parc u}
\left(\frac{z_u}{f(z)}\right)>\frac{f'(z)}{2 f(z)} \geq c_0,$$ and
therefore $$\frac{\parc}{\parc u} \left(\frac{z_u}{f(z)} -c_0
u\right)>0.$$ This condition easily implies that
$$\frac{z_u}{f(z)}-c_0 u$$ is bounded in $\Omega$, from where $z_u$
is also bounded in $\Omega$.

Recall now that, by construction, the coordinates $(u,v)$ depend on
the arbitrary unit vector $\xi$ which determines the $\parc_v$
direction at $(u,v)=(0,0)$. So, a different choice $\hat{\xi}$ of
the vector $\xi$ will result in new coordinates $(\hat{u},\hat{v})$
for which $z_{\hat{u}}$ will be bounded. By choosing $\hat{\xi}$ so
that $\{\parc_u,\parc_{\hat{u}}\}$ are linearly independent at
$(0,0)$, it is easy to deduce then that $z_v$ is also bounded around
$(0,0)$.

On the other hand, a computation shows that the angle function
$\nu=\langle N,\partial t\rangle$ of $\Sigma$ satisfies
\begin{equation}\label{eqa3}
\nu^2 = \esiz N,\parc_t\esde^2 = \frac{1}{||\eta||^2} \esiz
\eta,\parc_t\esde^2 = \frac{f(z)}{f(z) + z_u^2 + \frac{z_v^2}{G}}.
\end{equation}
As $z,z_u$ and $z_v$ are bounded around $(0,0)$, we conclude from
\eqref{eqa3} that $\nu^2\geq c>0$ around $(0,0)$, i.e. $\Sigma$ is
uniformly non-vertical around the isolated singularity. Finally, as
$z_u$ and $z_v$ are bounded in a punctured neighborhood of the
origin we deduce in a standard way that $z$ is continuous at
$(0,0)$.
 This completes the proof of Theorem \ref{thap1}.
$\hfill\Box$

\vspace{0.2cm}

\emph{Proof of Theorem \ref{thap2}:} By the condition $f=1$, we get
from \eqref{eqa1} that $z_{uu}$ has a constant sign on the punctured
disk $\Omega\subset R_{\ep}\setminus \{(0,0)\}$. Thus $z_u$ is
bounded in $\Omega$. The rest of the argument is identical to the
one used in the proof of Theorem \ref{thap1}. $\hfill\Box$

\vspace{0.3cm}

\emph{Proof of Theorem \ref{thap3}:}

\begin{observacion}\label{nota}
Theorem \ref{thap3} is also true if, instead of assuming that $\war$
is a Hadamard manifold, we ask for the following property ${\bf
(P)}$ to hold for some strongly convex geodesic disk $D_r\subset
M^2$ of radius $r$ centered at $p_0$.

\begin{quote}
\emph{Property {\bf (P)}:} For any two points in $D_r\times
\R\subset \war$ there is a unique minimizing geodesic arc joining
both points, and moreover, this geodesic arc is totally contained in
$D_r\times \R$.
\end{quote}

Observe that any Hadamard manifold $\war$ verifies property ${\bf
(P)}$. Basic examples of this type of manifolds are $\R^3$, $\H^3$
and $\H^2\times \R$.

\end{observacion}

 Let $D_r\subset M^2$ be a
strongly convex geodesic disk of radius $r$ centered at $p_0$, that
is, for every two points $p_1,p_2\in\overline{D_r}$ there exists a
unique minimizing geodesic $\gamma$ joining $p_1$ to $p_2$   such
that $\text{int}(\gamma)\subset D_r$. Assume that property ${\bf
(P)}$ holds in $D_r\times \R\subset
M^2\times_f \R$. We will assume without loss of generality that the graph $\Sigma$
is well defined on $D_r\setminus \{p_0\}$, and the second
fundamental form of $\Sigma$ is positive definite for the upwards
pointing unit normal $N$ of $\Sigma$ (recall that $\Kext>0$ on
$\Sigma$). This implies, using that the vertical planes
$\gamma\times \R$ over a geodesic $\gamma\subset M^2$ are totally
geodesic surfaces in $M^2\times_f \R$, that any geodesic
$\widetilde{\gamma}$ in $M^2\times_f \R$ which is tangent to
$\Sigma$ at some point $p\in \Sigma$ lies below $\Sigma$ around $p$
(note that $\widetilde{\gamma}\subset \gamma\times \R$ for some
geodesic $\gamma$ of $M^2$).

We define the \emph{epigraph} of $z$ by $${\rm epi} (z)=\{(p,t)\in
D_r\times \R : t\geq z(p), p \neq p_0\}.$$

 \textsc{Claim}. \label{claim1}
 {\it $\overline{{\rm epi} (z)}$ is a convex subset
of $\overline{D_r}\times \R$.}

Observe that the convexity notion makes sense since we are assuming
that property ${\bf (P)}$ holds in $D_r\times \R\subset M^2\times_f
\R$.

To prove this claim, we take two points $(p_1,t_1)$, $(p_2,t_2)$ in
$\overline{{\rm epi} (z)}\subset \overline{D_r}\times \R$ and prove
that the unique geodesic $\Gamma$ in $\overline{D_r}\times \R$
joining them is contained in $\overline{\rm{epi} (z)}$. We
distinguish several cases.

\emph{Case 1: the geodesic $\gamma$ joining $p_1$ and $p_2$
($p_1\neq p_2$) in $\overline{D_r}$ does not pass through $p_0$.}

First note that if $p_1=p_2 (\neq p_0)$, the geodesic $\Gamma$
corresponds to a vertical segment, so the property holds.

Consider the totally geodesic plane over $\gamma$, that is,
$\gamma\times \R\subset D_r\times \R$. Then, the geodesic $\Gamma$
is contained in $\gamma\times \R$. Let $\alfa=(\gamma\times \R)\cap
\Sigma$.

Let $\theta_0$ be the angle that the geodesic $\Gamma\subset
\gamma\times \R$ makes with the vertical direction $\parc_t$ at the
point $(p_1,t_1)$, and consider the family
$\{\Gamma_{\theta}\}_{\theta\in[0,\theta_0]}$ of geodesics in
$\gamma\times \R$ starting at $(p_1,t_1)$ and making an angle
$\theta$ with $\parc_t$ at this initial point. Note that
$\Gamma_{0}$ is $\{p_1\}\times [t_1,\8)$, that the interior of
$\Gamma_0$ does not intersect $\alfa$, that
$\Gamma_{\theta_0}=\Gamma$, and that all such geodesics only
intersect at the initial point $(p_1,t_1)$.

Once here, observe that the existence of a point $q\in \Gamma$ not
lying in $\overline{{\rm epi} (z)}$ would mean that $\alfa$ is
\emph{above} $\Gamma$ around $q$ in $\gamma\times \R$. But that
would mean that some geodesic $\Gamma_{\theta}$ lies above $\alfa$
in $\gamma\times \R$ and touches $\alfa$ tangentially at some point.
This is a contradiction with the fact that $\Sigma$ has positive
definite second fundamental form for the upwards pointing unit
normal, and hence lies locally above all its tangent geodesics. This
completes the proof of the convexity of $\overline{{\rm epi} (z)}$
in Case 1.

\emph{Case 2: the geodesic $\gamma$ joining $p_1$ and $p_2$ in
$\overline{D_r}$ passes through $p_0$.}

Let $\{x_n\}$ be a sequence of points in $D_r$ with $x_n\to p_2$,
and such that the geodesic in $D_r$ joining $x_n$ and $p_1$ does not
pass through $p_0$. Then, we can also take $(t_n)_n$ such that
$(x_n,t_n)\in \overline{{\rm epi} (z)}$ and $(x_n,t_n)\to (p_2,t_2)$
as $n\to \8$. By Case 1, the geodesic $\Gamma_n$ joining $(p_1,t_1)$
with $(x_n,t_n)$ is contained in $\overline{{\rm epi} (z)}$. Taking
limits, $\Gamma_n$ converge to the geodesic $\Gamma$ joining
$(p_1,t_1)$ with $(p_2,t_2)$. In particular, $\Gamma$ is contained
in $\overline{{\rm epi} (z)}$ as we wanted to prove. We remark that
this argument also holds in the case $p_0=p_2$.

\emph{Case 3: $p_1=p_2=p_0$.}

In this case we take two points $(p_0,t_1)$ and $(p_0,t_2)$ in
$\overline{{\rm epi} (z)}$ with $t_1<t_2$. Take a sequence
$(x_n,t_n)\to (p_0,t_1)$ with $(x_n,t_n)\in {\rm epi}
(z)$ for all $n$. Then, the vertical segments
$$\Gamma_n:= \{x_n\}\times [t_n,t_n+t_2-t_1] \subset \overline{{\rm
epi} (z)}$$ are geodesics, so by taking limits $\Gamma_n\to \Gamma
=\{p_0\}\times [t_1,t_2],$ which is also contained in
$\overline{{\rm epi} (z)}$.

Thus, we have proved that $\overline{{\rm epi} (z)}$ is a convex
subset of $\overline{D_r}\times \R$. That is, we have finished the
proof of the Claim above.

Let us observe now that there is some $t_0\in \R$ such that
$(p_0,t_0)\in {\rm int} (\overline{{\rm epi} (z)})$. For this, let
$p_1,p_2\in D_r$ such that $p_0$ lies in the geodesic $\gamma$
joining $p_1$ with $p_2$. If we take $t_1>z(p_1)$, then $(p_1,t_1)
\in {\rm int} (\overline{{\rm epi} (z)})$ and the geodesic in
$D_r\times \R$ joining $(p_1,t_1)$ with $(p_2,z(p_2))$ passes
through some point of the form $(p_0,t_0)$. By standard convexity
arguments, all the points in such a geodesic arc lie in ${\rm int}
(\overline{{\rm epi} (z)})$ except the endpoints. In particular,
$(p_0,t_0)\in {\rm int} (\overline{{\rm epi} (z)})$.

Consider now $(p_0,t_0)\in {\rm int} (\overline{{\rm epi} (z)})$,
and let $\ep>0$ so that $D_{\ep}\times (t_0-\ep,t_0+\ep)\subset
\overline{{\rm epi} (z)}$, where $D_{\ep}\subset D_r$ is a geodesic
disk of radius $\ep$ centered at $p_0$. Then, $D_{\ep}\times
(t_0-\ep,\8)\subset \overline{{\rm epi} (z)}$ from where it follows
that $(p_0,t_1)\in {\rm int} (\overline{{\rm epi} (z)})$ for all
$t_1>t_0$. By convexity, there are two possibilities:

\begin{enumerate}
  \item[(i)]
$\overline{{\rm epi} (z)}\cap (\{p_0\}\times \R) =
  \{p_0\}\times [h,\8)$ with $\{p_0\}\times (h,\8)\subset {\rm int} (\overline{{\rm epi}
  (z)})$, or
  \item[(ii)] $\overline{{\rm epi} (z)}\cap (\{p_0\}\times \R) =
  \{p_0\}\times \R$, with $\{p_0\}\times \R\subset {\rm int} (\overline{{\rm epi}
  (z)})$.
\end{enumerate}

We will prove first of all that in case (i) above we have $\lim_{p\to p_0}
z(p)=h$. Thus, case (i) cannot happen since we assumed that $z$ is not bounded around $p_0$.

Let $\{p_n\}$ be a sequence of points in $D_r\setminus \{p_0\}$
converging to $p_0$. As we are in case (i), given $\delta >0$ there exists
some $\ep>0$ such that
$$(p_0,h+\delta) \in D_{\ep} \times (h+\delta-\ep,h+\delta+\ep)
\subset \overline{{\rm epi}
  (z)}$$ and $$(p_0,h-\delta)
\in D_{\ep} \times (h-\delta-\ep,h-\delta+\ep) \subset (D_r\times
\R)\setminus \overline{{\rm epi}
  (z)}.$$ In particular, we have $$D_{\ep}\times
  [h+\delta,\8)\subset {\rm int} (\overline{{\rm epi}
  (z)}) \hspace{0.5cm} \text{and} \hspace{0.5cm} D_{\ep}\times
  (-\8,h-\delta]\subset {\rm int}( (D_r\times \R)\setminus \overline{{\rm epi}
  (z)}).$$
Now, observe that, since $(p_n,z(p_n))$ lies in the boundary of
${\rm epi} (z)$, it holds $(p_n,z(p_n))\not\in {\rm int}
(\overline{{\rm epi}
  (z)}) \cup {\rm int}( (D_r\times \R)\setminus \overline{{\rm epi}
  (z)}).$ Thus, choosing $n_0$ so that if $n\geq n_0$ we have
  $p_n\in D_{\ep}$, we can conclude from the previous condition that
  $|z(p_n)-h|<\delta$ for every $n\geq n_0$.
  
 This proves that $\lim_{p\to p_0}
z(p)=h$. Hence case (i) cannot happen, i.e., the condition in case (ii) must hold.

Now, the same argument used above to prove that $\lim_{p\to p_0}
z(p)=h$ in case (i) also shows that $\lim_{p\to p_0}
z(p)=-\8$ in case (ii). Observe that we would have obtained $\lim_{p\to p_0}
z(p)=+\8$ if we had worked with the opposite (downwards pointing) unit normal of $\Sigma$.
This completes the proof of Theorem
\ref{thap3}. $\hfill\Box$

We finish this section with the following result characterizing
isolated singularities of graphs of positive extrinsic curvature in
Hadamard warped products.

\begin{corolario}\label{corap}
Let $\Sigma\subset M^2\times_f \R$ be a graph with $\Kext>0$ over a
punctured disk $D^*\subset M^2$ centered at $p_0\in M^2$, and assume
that $M^2\times_f \R$ is a Hadamard manifold. Then, exactly one of
the following three situations happen:
 \begin{enumerate}
   \item
$\Sigma$ extends as a smooth $C^1$-graph across $p_0$.
 \item
$\Sigma$ extends continuously (but not $C^1$-smoothly) across $p_0$,
and is uniformly non-vertical at $p_0$.
 \item
The height function of $\Sigma$ tends to $+\8$ or to $-\8$ at $p_0$.
In particular, the metric of $\Sigma$ is complete around the
puncture.
 \end{enumerate}
Moreover, assume that the sectional curvature of $M^2\times_f \R$ is
bounded from below on a tubular neighborhood $D_{\ep}\times\R$ of
the geodesic $\{p_0\}\times \R$ by a number $i(p_0)\in \R$. Then the
third situation above cannot happen provided $\Kext>-i(p_0)$.
\end{corolario}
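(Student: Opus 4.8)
The plan is to read the trichotomy directly off Theorems \ref{thap1} and \ref{thap3} and then to establish the completeness statement and the ``moreover'' part. Either $\Sigma$ extends across $p_0$ as a $C^1$ graph, in which case we are in alternative (1) (incompatible with (2) and (3)), or, by Definition \ref{sing}, $p_0$ is an isolated singularity of $\Sigma$. In the latter case I would split according to whether $\Sigma$ is bounded around $p_0$. If it is, Theorem \ref{thap1} gives that $\Sigma$ extends continuously across $p_0$ and is uniformly non-vertical; as $p_0$ is an isolated singularity this extension is not $C^1$, which is alternative (2). If $\Sigma$ is not bounded around $p_0$, then, recalling from Remark \ref{nota} that every Hadamard manifold enjoys property ${\bf (P)}$, Theorem \ref{thap3} applies and yields $\lim_{p\to p_0}z(p)=\pm\8$, which is alternative (3). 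Since $z$ is bounded near $p_0$ in cases (1)--(2) and $|z|\to\8$ in case (3), exactly one of the three situations occurs.

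For the completeness claim in situation (3): in local coordinates $(u,v)$ on $M^2$ the induced metric of $\Sigma$ equals $f(z)\,g+dz\otimes dz$, so $\langle w,w\rangle\geq dz(w)^2$ for every $w\in T\Sigma$, and therefore the length in $\Sigma$ of any curve $\alpha$ is at least the total variation of $z\circ\alpha$. A divergent curve in the graph over a punctured disk either leaves the disk through its outer boundary or converges to $p_0$; in the second case $z\circ\alpha\to\pm\8$ by situation (3), so $\alpha$ has infinite length. Hence the end of $\Sigma$ over $p_0$ is complete.

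For the last assertion I would argue by contradiction, assuming situation (3). By the proof of Theorem \ref{thap3} we are then in the case there labelled (ii), so $\overline{{\rm epi}(z)}$ is convex and $\{p_0\}\times\R$ lies in its interior; after shrinking the disk we may assume that $\Sigma$ together with its epigraph is contained in $D_\ep\times\R$, where the sectional curvature of $\war$ is $\geq i(p_0)$. Orient $\Sigma$ so that its second fundamental form is positive definite. The Gauss equation gives $K_G=\Kext+\overline{K}(T_p\Sigma)\geq\Kext+i(p_0)$, where $\overline{K}$ is the ambient sectional curvature, so the hypothesis $\Kext>-i(p_0)$ forces $K_G\geq\beta>0$ on $\Sigma$ for some constant $\beta$. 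This is incompatible with the completeness of the end of $\Sigma$ over $p_0$: pick $x$ in that end at intrinsic distance more than $\pi/\sqrt{\beta}+1$ from the outer boundary of the end (possible, since that distance is unbounded on a complete non-compact end) and $y$ in the end with $d_\Sigma(x,y)=\pi/\sqrt{\beta}+\tfrac12$ (possible by the intermediate value theorem, as $d_\Sigma(x,\cdot)$ is continuous, vanishes at $x$ and is unbounded on the end); then the minimizing geodesic from $x$ to $y$ lies in a relatively compact ball where $K_G\geq\beta$ yet has length exceeding $\pi/\sqrt{\beta}$, so it carries an interior conjugate point by Bonnet's theorem, contradicting minimality. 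Hence situation (3) cannot occur.

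The main obstacle is this last step. One has to ensure that the part of $\Sigma$ used stays inside the tube $D_\ep\times\R$ where the sectional curvature is controlled, so that both the Gauss-equation estimate and the localized Bonnet argument are valid, and one needs $\Kext>-i(p_0)$ with a uniform gap so that $K_G$ is genuinely bounded below by a positive constant on the end. An alternative to the Bonnet step would be to compare $\Sigma$, as the boundary of the convex body $\overline{{\rm epi}(z)}$, with the largest geodesic tube around $\{p_0\}\times\R$ contained in $\overline{{\rm epi}(z)}$, and to bound the extrinsic curvature of this tube by $-i(p_0)$ using the Hessian comparison theorem together with the convexity of geodesic tubes in a Hadamard manifold; but the existence of such an inscribed tube of positive radius is itself delicate in a warped product (the tube and the base disk may ``come arbitrarily close at infinity''), so I would favour the completeness-based argument.
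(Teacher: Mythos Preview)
Your argument is correct and follows essentially the same route as the paper: the trichotomy comes straight from Theorems \ref{thap1} and \ref{thap3}, the completeness in case (3) from the inequality $I\geq dz\otimes dz$ (the paper phrases this as ``the distance between $(p_1,t_1)$ and $(p_2,t_2)$ is at least $|t_2-t_1|$''), and the ``moreover'' from the Gauss equation plus a Bonnet--Myers bound on the length of minimizing geodesics. Your treatment is in fact more careful than the paper's in two respects: you prove completeness explicitly rather than leaving it implicit, and you flag the need for a uniform gap in $\Kext>-i(p_0)$ to get $K_G\geq\beta>0$; the paper's one-paragraph proof simply asserts the positive lower bound and invokes the geodesic-length estimate, and your alternative tube-comparison idea is not needed.
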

\begin{proof}
By Theorems \ref{thap1} and \ref{thap3} we only need to prove the
final assertion. This is a consequence of the Gauss equation for
$\Sigma$ in $M^2\times_f \R$, which implies that if $\Kext>-i(p_0)$,
the Gaussian curvature of $\Sigma$ is bounded from below by a
positive constant around $p_0$.

But now, observe that in $M^2\times_f \R$, the distance between
$(p_1,t_1)$ and $(p_2,t_2)$ is at least $|t_2-t_1|$. Hence, the fact
that the height function of $\Sigma$ tends to $\pm \8$ around $p_0$
indicates that there exist points $p,q\in\Sigma$ arbitrarily far
away from each other, and also from $\parc \Sigma$. This contradicts that on a surface with
curvature bounded from below by some $c>0$, the length of any
minimizing geodesic is at most $\pi /\sqrt{c}$.
\end{proof}

\begin{ejemplo}
The half-space model of the hyperbolic 3-space
$\H^3=(\{(x_1,x_2,x_3)\in\R^3:\
x_3>0\},\frac{1}{x_3^2}(dx_1^2+dx_2^3+dx_3^2))$ can be written as
the warped product
 \begin{equation}\label{horomodel}
(\R^2\times\R, e^{-2z}(dx^2+dy^2)+dz^2).
 \end{equation}
Hence, $\R^2\times_{e^{-2z}} \R$ is a Hadamard manifold. In this
model, the graphs $z=z(x,y)$ correspond geometrically to graphs in
$\H^3$ over horospheres. For every $R>0$, the graph $z=z(x,y)$ of
the function $$z(x,y)= \log \left(R- \sqrt{R^2-x^2-y^2}\right),
\hspace{1cm} 0<x^2+y^2<R^2 ,$$ is a piece of a horosphere $\Sigma_R$
in $\H^3$. In particular, $\Sigma_R$ has $K_{\rm ext}=1$. Note that
$(0,0)$ is an isolated singularity of the graph, that $z\to -\8$ as
$(x,y)\to (0,0)$, and that $\Sigma_R$ is complete around the
puncture. This provides an example of Case (3) in Corollary
\ref{corap}. $\Sigma_R$ also shows  that the inequality $K_{\rm ext}
>-i(p_0)$ in the last assertion of Corollary \ref{corap} is
necessary, and cannot be weakened to $K_{\rm ext} \geq -i(p_0)$.
\end{ejemplo}

\section{Isolated singularities of the Monge-Ampère equation: preliminaries}\label{newsec3}

Let us consider the general elliptic equation of Monge-Ampère type
in dimension two, which is the following fully nonlinear PDE:
 \begin{equation}\label{eq0}
{\rm det }( D^2 z + \mathcal{A}(x,y,z,Dz)) =\varphi(x,y,z,D z)>0.
 \end{equation}
Here, $Dz,D^2 z$ denote respectively the gradient and the Hessian of
$z$, and $\mathcal{A}(x,y,z,Dz)\in \cM_2(\R)$  is symmetric. The
Monge-Ampère equation \eqref{eq0} can be rewritten as
 \beq\label{eq1}
   Az_{xx}+2B z_{xy}+C z_{yy} +z_{xx} z_{yy}-z_{xy}^2=E,\eeq
where $A=A(x,y,z,z_x,z_y),\dots, E=E(x,y,z,z_x,z_y)$ are defined on
an open set $\cU\subset \R^5$ and satisfy on $\cU$ the ellipticity
condition \beq\label{elliptic} \cD:=AC-B^2+E>0. \eeq

We will study solutions $z$ to the elliptic equation \eqref{eq1}
around a non-removable isolated singularity, and for simplicity we
will assume that this singularity is placed at $(0,0)$. All our
results can be trivially adapted if the singularity is placed
elsewhere.

\vspace{0.2cm}

{\bf Convention:} From now on we will use the following notations:

 \begin{itemize}
   \item
$\Omega=\{(x,y)\in\R^2 : 0<x^2+y^2<\rho^2\} $, a punctured disc
centered at the origin.
 \item
$\cU\subset \R^5$ is an open set.
 \item
$A,\dots, E$ are functions in $C^{2} (\cU)$, which satisfy in
$\cU$ the ellipticity condition \eqref{elliptic}.
 \item
For a function $z\in C^2 (\Omega)$, we define
\begin{equation}\label{hype} \mathfrak{H}:=
\{(x,y,z(x,y),z_x(x,y),z_y(x,y)) : (x,y)\in\Omega\}.
 \end{equation}
 \end{itemize}

\begin{definicion}\label{def: bounded}
We say that a solution $z\in C^2(\Omega)$ to \eqref{eq1} for the
coefficients $A,\dots, E$ is a \emph{singular solution} of
\eqref{eq1} in $\Omega$ if
 \begin{enumerate}
   \item
$z$ is not $C^1$ at the origin.
   \item
$\overline{\mathfrak{H}}$ is a compact subset of $\cU$.
\end{enumerate}
  \end{definicion}

In the case that the coefficients $A,\dots, E$ are real analytic on
$\cU$, the solution $z$ is also real analytic on $\Omega$. Also, it
can be easily proved that in the conditions of the definition,
$z(x,y)$ extends \emph{continuously} to $(0,0)$.

\emph{From now on we will assume for simplicity that any singular
solution to \eqref{eq1} has been continuously extended to the origin
by $z(0,0)=0$, and that $\cH\neq \emptyset$, where}
$$\cH:= \cU\cap \{(x_1,\dots, x_5)\in \R^5 : x_1=x_2=x_3=0\}.$$

Also, we will be using the standard classical notation $p=z_x$,
$q=z_y$, $r=z_{xx}$, $s=z_{xy}$, $t=z_{yy}$.

Let $z\in C^2(\Omega)$ be a singular solution to \eqref{eq1}. It
follows then from the ellipticity condition \eqref{elliptic} that
the expression \beq\label{metrica}
ds^2=\varepsilon\left((z_{xx}+C)dx^2+2(z_{xy}-B)dxdy+(z_{yy}+A)dy^2
\right) \eeq is a Riemannian metric on $\Omega$ for $\varepsilon=1$
or $\varepsilon=-1$. Then, it is a well known fact that $(\Omega,
ds^2)$ admits global conformal parameters $w:=u+iv$ such that

\beq\label{conforme} ds^2=\frac{\sqrt{\cD}}{u_xv_y-u_yv_x}|dw|^2.
\eeq That is, there exists a $C^2$ diffeomorphism

\begin{equation}\label{tristar}
\Phi:\Omega\rightarrow  \Lambda:=\Phi(\Omega)\subset \R^2 ,\qquad
(x,y)\mapsto \Phi(x,y)=(u(x,y),v(x,y))
\end{equation}
satisfying \beq x_uy_v-x_vy_u>0,\eeq and the Beltrami system
 \beq\label{sist}  \left(\begin{array}{c}
 v_x\\v_y
 \end{array}\right)=\frac{1}{\sqrt{\cD}}\left(\begin{array}{cc}s-B & -(C+r)\\A+t & -(s-B)
 \end{array}\right)\left(\begin{array}{c}
 u_x\\u_y\end{array}\right).\eeq

Here, $\Lambda$ is a domain in $\R^2\equiv \C$ which is conformally
equivalent to either the punctured disc $\D^*$ or an annulus
$\A_R=\{z\in \C : 1<|z|<R\}$.

In this situation, motivated by \cite{HeB}, we introduce the
following definition.

\begin{definicion}\label{HeB}
A solution $z$ to \eqref{eq1} in $\Omega$ satisfies the
\emph{Heinz-Beyerstedt condition}, in short \emph{HeB-condition}, if
 $A_p$, $A_q+2B_p$, $C_p+2B_q$ and $C_q$ are Liptschitz continuous in $\overline{\Omega}$ when they
 are considered as functions of $x$ and $y$.
\end{definicion}

It was proved by Heinz and Beyerstedt (cf. \cite[Lemma 3.3]{HeB})
that if $z\in C^2(\Omega)$ is a singular solution to \eqref{eq1}
which satisfies the HeB-condition, then $\Lambda$ is conformally
equivalent to some annulus $\A_R$.

Let us point out that the HeB-condition holds automatically for
singular solutions of a wide class of Monge-Ampère equations, as
explained in the next lemma.

  \begin{lema}\label{lem: estrella}
 Suppose that  the coefficients $A,\ldots,E:\cU\subset\R^5\lflecha\R$ in \eqref{eq1} satisfy the condition:

   $$\begin{array}{lr}
\begin{array}
{l} \mbox{The functions  $A_p$, $A_q+2B_p$, $C_p+2B_q$ and $C_q$  do
not depend on}\\  \mbox{$p$ and $q$ in $\cU$.}
\end{array}
&\qquad  (\star)
   \end{array}$$
       Then any singular solution $z(x,y)$ to \eqref{eq1} satisfies the HeB-condition in Definition \ref{HeB}.
\end{lema}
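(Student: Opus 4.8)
The plan is to exhibit each of the four functions $A_p$, $A_q+2B_p$, $C_p+2B_q$, $C_q$, evaluated along a singular solution, as the composition of a \emph{fixed} Lipschitz map $\overline{\Omega}\to\R^3$ with a \emph{locally Lipschitz} function on $\R^3$. Hypothesis $(\star)$ is exactly what makes the second factor depend on only three variables rather than five, which is what forces it to be Lipschitz once restricted to a compact set.

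First I would collect the regularity of $z$ that is already available. Since $\overline{\mathfrak{H}}$ is a compact subset of $\cU$ (Definition \ref{def: bounded}), the functions $z$, $z_x$, $z_y$ are bounded on $\Omega$; fix $M$ with $|Dz|\le M$ on $\Omega$. As noted right after Definition \ref{def: bounded}, $z$ also extends continuously to the origin with the normalization $z(0,0)=0$ (and, shrinking $\rho$ slightly if necessary, we may also regard $z$ as $C^2$ on a neighborhood of $\overline{\Omega}\setminus\{(0,0)\}$). I would then check that $z$ is $M$-Lipschitz on $\overline{\Omega}$: if the segment joining two points avoids the origin this is the mean value inequality along that segment; if the segment passes through the origin, or one of the two points is the origin, one integrates $Dz$ along the one or two radial subsegments and uses that in these configurations the lengths of the subsegments add up exactly to the distance between the endpoints. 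Hence the map $P\colon\overline{\Omega}\to\R^3$, $P(x,y):=(x,y,z(x,y))$, is Lipschitz, and $P(\overline{\Omega})$ is contained in the compact set $K:=\pi(\overline{\mathfrak{H}})$, where $\pi\colon\R^5\to\R^3$ denotes the projection onto the first three coordinates.

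Next I would descend the coefficients using $(\star)$. Because $A,\dots,E\in C^2(\cU)$, the four functions $g_1:=A_p$, $g_2:=A_q+2B_p$, $g_3:=C_p+2B_q$, $g_4:=C_q$ lie in $C^1(\cU)$, and by $(\star)$ each $g_i$ is constant in the $(p,q)$-directions, so along $\mathfrak{H}$ we may write $g_i=\widetilde g_i\circ\pi$ for a function $\widetilde g_i$ that is $C^1$ on a neighborhood of $K$ in $\R^3$ (locally one simply freezes $(p,q)$). A $C^1$ function on an open set is Lipschitz on every compact subset of that open set: choose $\delta>0$ with the closed $\delta$-neighborhood of $K$ still contained in the domain, bound the gradient there, and deal with pairs of points at distance $>\delta$ by the trivial sup-norm estimate. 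Thus each $\widetilde g_i$ is Lipschitz on $K$.

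Finally I would compose. For each $i$, the function of $(x,y)$ appearing in Definition \ref{HeB} is $g_i(x,y,z(x,y),z_x(x,y),z_y(x,y))=\widetilde g_i(P(x,y))$, a composition of the Lipschitz map $P\colon\overline{\Omega}\to K$ with the Lipschitz function $\widetilde g_i$ on $K$; hence it is Lipschitz on $\overline{\Omega}$. Since this is precisely the HeB-condition for $A_p$, $A_q+2B_p$, $C_p+2B_q$ and $C_q$, the lemma follows. I do not expect a genuine obstacle: the real content is the bookkeeping observation that $(\star)$ removes the $(p,q)$-dependence, after which everything reduces to composing Lipschitz maps on compact sets; the only steps needing a little care are the Lipschitz bound for $z$ up to the singular point (the punctured disc not being convex) and keeping the $C^1$-to-Lipschitz arguments on compact sets sitting strictly inside the open set $\cU$.
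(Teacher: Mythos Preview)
Your proposal is correct and follows essentially the same route as the paper's proof: both use $(\star)$ to factor each of the four functions through the map $(x,y)\mapsto(x,y,z(x,y))$ into $\R^3$, and then exploit that $Dz$ is bounded (from the compactness of $\overline{\mathfrak{H}}$) to obtain the Lipschitz bound on $\overline{\Omega}$. The only cosmetic difference is that the paper computes the $(x,y)$-partials of the composed function directly via the chain rule and invokes the $W^{1,\infty}=C^{0,1}$ identification, whereas you first show $z$ itself is Lipschitz and then compose Lipschitz maps; your extra care with segments through the puncture is a nice touch that the paper leaves implicit.
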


\begin{proof}
We denote by $F(x,y,z)$ any of the functions in the statement of the
condition $(\star)$. Then, $F$ can be seen as a function $\tilde{F}$
depending on the variables $(x,y)$ using the composition
$\tilde{F}(x,y)=(F\circ G) (x,y)$ where $G(x,y)=(x,y,z(x,y))$. As
$A,\ldots,E\in C^2(\cU)$, we have $F\in C^1(\cU)$ and $\tilde{F}\in C^1(\Omega)\cup C^0 (\overline{\Omega})$. Note that
$$\tilde{F}_x (x,y)= F_x (x,y,z(x,y)) + F_z (x,y,z(x,y)) z_x (x,y),$$ which is bounded in $\Omega$ by condition (2) in Definition \ref{def: bounded}. Analogously, $\tilde{F}_y$ is also bounded in $\Omega$. Hence, $\tilde{F}\in C^{0,1}(\overline{\Omega})$ (see \cite[p. 154]{GiTr}). This concludes the proof of Lemma \ref{lem: estrella}.
\end{proof}

For later use, let us also point out the following basic result.

\begin{lema}\label{lemconvex}
Let $z\in C^2(\Omega)$ be a  singular solution to \eqref{eq1}, where
$\Omega$ is an open domain of $\R^2$ (not necessarily a punctured
disc). Then, for sufficiently large constants $a,c>0$, the function
 \begin{equation}\label{zconvex}
z^*(x,y)= z(x,y)+ \frac{\ep a}{2} x^2 + \frac{\ep c}{2} y^2
 \end{equation}
satisfies that its graph $\{(x,y,z^*(x,y)): (x,y)\in \Omega\}$ is a
locally convex surface in $\R^3$, where $\ep\in \{-1,1\}$ is as in \eqref{metrica}.
\end{lema}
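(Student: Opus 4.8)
The plan is to show that the Hessian $D^2 z^*$ is positive definite at every point of $\Omega$ once $a,c$ are chosen large enough, working with the sign $\ep$ fixed as in \eqref{metrica}. Note first that $D^2 z^* = D^2 z + \ep\,\diag(a,c)$, so we need $D^2 z + \ep\,\diag(a,c) > 0$. The key structural input is the ellipticity condition \eqref{elliptic}: rewriting \eqref{eq1} as $\det(D^2 z + \cA) = \cD > 0$ with $\cA = \begin{pmatrix} C & -B \\ -B & A\end{pmatrix}$, and recalling from the discussion around \eqref{metrica} that $\ep\,(D^2 z + \cA)$ is positive definite on $\Omega$ (this is exactly how $\ep$ is selected so that $ds^2$ is a Riemannian metric). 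Thus $\ep\,D^2 z > -\ep\,\cA$ as symmetric matrices on $\Omega$.

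Next I would use that $\overline{\mathfrak H}$ is a compact subset of $\cU$ by condition (2) of Definition \ref{def: bounded}, and that $A,\dots,E$ are continuous on $\cU$. Therefore $A$, $B$, $C$ are bounded along $\mathfrak H$, i.e. the entries of $\cA$ are uniformly bounded on $\Omega$ by some constant $\kappa>0$. Then $-\ep\,\cA \geq -\kappa\, \id_2$ as symmetric matrices (here $\id_2$ is the $2\times 2$ identity), so combining with the previous step, $\ep\,D^2 z \geq -\kappa\,\id_2$ on $\Omega$. Consequently, taking $a = c = \kappa+1$ (or any value $> \kappa$), we get
\[
\ep\,D^2 z^* = \ep\,D^2 z + \diag(a,c) \geq (\diag(a,c) - \kappa\,\id_2) = \diag(a-\kappa,\,c-\kappa) > 0
\]
on $\Omega$. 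Hence $\ep\,D^2 z^*$ is positive definite everywhere on $\Omega$, which is precisely the statement that the graph of $z^*$ is locally strictly convex (with respect to the orientation determined by $\ep$).

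I do not expect a serious obstacle here; the only point requiring a little care is the passage from "$\ep(D^2 z + \cA)>0$ pointwise" to a \emph{uniform} lower bound for $\ep D^2 z$ — but this is handled cleanly by the compactness of $\overline{\mathfrak H}$ in $\cU$, which bounds the continuous coefficients $A,B,C$ along the $1$-jet of $z$, and then by the elementary fact that a symmetric matrix with entries bounded by $\kappa$ has all eigenvalues in $[-2\kappa,2\kappa]$, so $-\ep\cA \geq -2\kappa\,\id_2$. One should simply take $a,c > 2\kappa$ to be safe. No additional regularity of $z$ beyond $C^2(\Omega)$ is needed, and the argument does not use that $\Omega$ is a punctured disc, consistent with the statement.
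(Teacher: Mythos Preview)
Your proof is correct and follows essentially the same strategy as the paper's: both use that $\ep(D^2z+\cA)>0$ (the definiteness of the metric \eqref{metrica}) together with the uniform boundedness of $A,B,C$ on $\mathfrak{H}$ coming from compactness of $\overline{\mathfrak{H}}\subset\cU$, and then add a large enough diagonal correction to make $\ep D^2z^*$ positive definite. The paper does this concretely by writing out the $2\times 2$ matrix $\mathcal{N}=\diag(c,a)-\cA$ and checking its leading minors, while you phrase it via matrix inequalities and an eigenvalue bound on $\cA$; these are cosmetic differences only. (Your self-correction from $\kappa$ to $2\kappa$ is the right fix, and note the minor slip in your opening line: what you actually establish, and what is needed, is $\ep\,D^2z^*>0$ rather than $D^2z^*>0$.)
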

\begin{proof}
We will assume $\ep=1$; the case $\ep=-1$ is analogous. As the coefficients $A, \dots, E$ are bounded on $\mathfrak{H}$, we
can find constants $a,c>0$ such that
$$ c-C>0,\qquad
a-A>0,\qquad (c-C)(a-A)-B^2>0,
$$ i.e.  the matrix \beq\label{N} \mathcal{N}=\left(
                                                                         \begin{array}{cc}
                                                                           c-C &B \\
                                                                           B & a-A \\
                                                                         \end{array}
                                                                       \right)
\eeq is positive definite. Now, we may use the fact that $ds^2$ in
\eqref{metrica} is positive definite to conclude that the symmetric
bilinear form
$$ds^2+(dx,dy)\mathcal{N}(dx,dy)^T=(dx,dy)\left(
                                                            \begin{array}{cc}
                                                              r+c & s \\
                                                              s & t+a \\
                                                            \end{array}
                                                          \right)
(dx,dy)^T$$ is also positive definite, that is, it is a Riemannian
metric on $\Omega$.

On the other hand, a straightforward computation shows that the
matrix of the second fundamental form of the graph of $z^*(x,y)$ in
\eqref{zconvex} with respect to the upwards-pointing unit normal is given by
$$
{\rm II}^*=\frac{1}{\sqrt{1+(p+cx)^2+(q+ay)^2}}\left(
                              \begin{array}{cc}
                                r+c & s\\
                                s & t+a \\
                              \end{array}
                            \right),$$
which we have just proved is positive definite on $\Omega$. In
particular, the graph of $z^*(x,y)$ has positive curvature at every
point, which proves the assertion.
\end{proof}

\section{The limit gradient of singular solutions}\label{secgrad}

Let $z(x,y)$ be a singular solution to \eqref{eq1} that satisfies
the HeB-condition, and let $(u,v)$ be the conformal parameters
associated to $z(x,y)$ introduced in the previous section. As
explained there, $(u,v)$ are defined on a domain $\Lambda\subset \C$
conformally equivalent to an annulus. Thus, we may assume $\Lambda$
to be a quotient strip $\Gamma_R:=\{w\in \C: 0<\Im w < R\} /(2\pi
\Z)$. From now on $(u,v)$ will denote the canonical coordinates in
this strip, and the quantities $x,y,z,p,q$ associated to the
solution $z(x,y)$ will be seen as functions depending on $(u,v)$.
Note that all of them are $2\pi$-periodic in the variable $u$, by
construction.

Let $G=\{(x,y,z(x,y)):(x,y)\in \Omega\}\subset \R^3$ be the graph of
$z(x,y)$. Then we can parameterize $G$ as

 \begin{equation}\label{grafconf}
\psi (u,v)=(x(u,v),y(u,v),z(u,v)):\Gamma_R\flecha G\subset \R^3,
  \end{equation}
so that $\psi$ extends continuously to $\R$ with
$\psi(u,0)=(0,0,0)$. We next prove a boundary regularity result for
the map
 \begin{equation}\label{zneg}
 {\bf z} (u,v)= (x(u,v),y(u,v),z(u,v),p(u,v),q(u,v)):\Gamma_R\flecha \R^5.
 \end{equation}

\begin{lema}\label{extanalitica}
Let $z\in C^2(\Omega)$ be a singular solution to \eqref{eq1} that
satisfies the HeB-condition, and assume that $A,\ B,\ C,\ E \in
C^k(\cU)$, $k\geq 2$ (resp. $A,\ B,\ C,\ E \in C^{\omega} (\cU)$).
Let $(u,v)$ be the conformal coordinates in $\Gamma_R$ associated to
$z$ as explained previously. Then the map ${\bf z}(u,v)$ in
\eqref{zneg} extends as a $C^{k,\alpha}$ map $\forall
\alpha\in(0,1)$ (resp. as a real analytic map) to $\Gamma_R\cup \R$.
\end{lema}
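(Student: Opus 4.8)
The plan is to reduce the statement to a boundary-regularity result for solutions of an elliptic system whose principal part is the Cauchy--Riemann operator, which is precisely the setting of Heinz--Beyerstedt. First I would rewrite the Beltrami system \eqref{sist} in terms of the inverse diffeomorphism $\Phi^{-1}\colon \Gamma_R \to \Omega$, so that $(x(u,v),y(u,v))$ satisfies a first-order elliptic system of the form $\partial_{\bar w}(x+iy) = \mu(u,v,x,y,z,p,q)\,\partial_w(x+iy) + \nu(\cdots)\,\overline{\partial_w(x+iy)}$ with coefficients built from $A,\dots,E$ evaluated along $\mathfrak H$; the ellipticity \eqref{elliptic} guarantees $|\mu|+|\nu|$ is bounded away from $1$ on $\overline{\mathfrak H}$, which is compact in $\cU$ by Definition \ref{def: bounded}(2). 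The quantities $p=z_x$, $q=z_y$, $z$ are then recovered by integrating the total differentials $dz = p\,dx+q\,dy$ and $dp$, $dq$ along the conformal coordinate curves; using the equation \eqref{eq1} itself, the derivatives $p_u,p_v,q_u,q_v$ are algebraic expressions in $x_u,x_v,y_u,y_v$ and in $A,\dots,E$ evaluated at ${\bf z}(u,v)$. Thus ${\bf z}(u,v)$ solves a closed first-order elliptic system on $\Gamma_R$, with the single homogeneous boundary condition $\psi(u,0)=(0,0,0)$ coming from the continuous extension of $\psi$ established in the previous section (which in turn rests on Theorem \ref{thap1}-type boundedness of the gradient).

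Next I would invoke the Heinz--Beyerstedt theory (\cite{HeB}, already cited for the conformal-annulus structure) to get the base case of boundary regularity: because $z$ satisfies the HeB-condition, the map ${\bf z}(u,v)$ extends to $\Gamma_R\cup\R$ as a map of class $C^{1,\alpha}$ for every $\alpha\in(0,1)$, and in particular $x+iy$ extends $C^{1,\alpha}$ up to the boundary line $v=0$. Once this initial regularity is in hand, the rest is a bootstrapping argument. Treating the system as a perturbed Cauchy--Riemann equation, one writes $\partial_{\bar w}(x+iy) = h(u,v)$ where $h$ has the same boundary regularity as the current iterate of ${\bf z}$; by the boundary Schauder estimates for the $\bar\partial$-operator on the half-strip (with the function vanishing on $v=0$, or more precisely after subtracting off the known boundary values), one upgrades $x+iy$ to one more derivative in H\"older class, hence ${\bf z}$ to $C^{2,\alpha}$, then feeds this back into the coefficients $\mu,\nu$ and into the algebraic expressions for $p_u,\dots,q_v$, which involve $A,\dots,E\in C^k(\cU)$ composed with ${\bf z}$; this gains regularity as long as $k$ has not been exhausted, giving $C^{k,\alpha}$ up to $\Gamma_R\cup\R$. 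In the analytic case $A,B,C,E\in C^\omega(\cU)$, after reaching $C^{2,\alpha}$ boundary regularity one applies a boundary analyticity theorem for elliptic systems with a flat, analytic boundary portion and analytic coefficients (in the spirit of Morrey's analyticity theorem together with its reflection-type boundary version), using that $v=0$ is an analytic arc and the boundary datum $\psi(u,0)=(0,0,0)$ is analytic, to conclude that ${\bf z}$ extends real-analytically across $\R$.

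The main obstacle I anticipate is the first step: extracting a genuine \emph{closed} first-order elliptic system for the full vector ${\bf z}=(x,y,z,p,q)$ with coefficients that are as regular as $A,\dots,E$, and checking that the only boundary condition is the degenerate one $\psi(u,0)=(0,0,0)$ (the circle $v=0$ being collapsed to the singular point), so that the standard boundary Schauder / analyticity machinery applies without a genuine oblique-derivative boundary condition. Concretely, one must verify that the algebraic elimination using \eqref{eq1} does not introduce denominators that degenerate near $v=0$ — this is exactly where the compactness of $\overline{\mathfrak H}$ in $\cU$ and the ellipticity $\cD>0$ on $\overline{\mathfrak H}$ are used — and that the HeB-condition is precisely what is needed to start the bootstrap at $C^{1,\alpha}$ rather than merely $C^0$. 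The H\"older bootstrap itself and the analytic continuation are then routine applications of known interior-and-boundary estimates, so I would state them with references rather than reprove them.
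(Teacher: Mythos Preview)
Your overall scheme (derive a closed differential system for ${\bf z}$, obtain an initial H\"older boundary estimate, then bootstrap, and finally invoke a boundary-analyticity theorem) matches the paper's, but the concrete implementation you sketch has a gap at the first nontrivial step and differs substantially in the choice of tools.

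The paper does \emph{not} work with a first-order Cauchy--Riemann/Beltrami system for ${\bf z}$. Instead it passes to second order: from \eqref{sist} one first extracts the explicit relations \eqref{d1} for $p_u,p_v,q_u,q_v$ in terms of $x_u,x_v,y_u,y_v$ and $A,\dots,E$, and then differentiates to obtain
\[
|\Delta Y|\le c\,|DY|^2,\qquad Y=(x,y),
\]
with $Y(u,0)=(0,0)$ (equations \eqref{laplacianos}--\eqref{ineqlaplac}). This is precisely the structure covered by Heinz's boundary-regularity theorem \cite{He}, which yields $Y\in C^{1,\alpha}$ up to $v=0$. From there, \eqref{d1} gives $p,q\in W^{1,\infty}\subset C^{0,1}$, \eqref{zfacil0} gives $z\in C^{1,\alpha}$, and then one bootstraps via potential theory on $\Delta Y$ to reach $C^{k,\alpha}$. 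For the analytic case the paper writes out $\Delta z,\Delta p,\Delta q$ as well, obtaining a semilinear second-order system $\Delta{\bf z}=h({\bf z},{\bf z}_u,{\bf z}_v)$ with analytic $h$, and applies M\"uller's boundary-analyticity theorem \cite{Mu} with the \emph{mixed} boundary data $\psi(u,0)=0$ together with the Neumann-type condition for $\phi_v(u,0)$ coming from \eqref{d1}.

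Your proposal misattributes the initial $C^{1,\alpha}$ step: you invoke ``Heinz--Beyerstedt theory \cite{HeB}'' for boundary H\"older regularity of ${\bf z}$, but \cite{HeB} is only used here (and in the paper) to conclude that the conformal domain is an annulus, not a punctured disk; it does not supply a $C^{1,\alpha}$ boundary estimate. The HeB-condition enters only indirectly, by guaranteeing the annular conformal type so that there is a boundary line $v=0$ to extend to at all. For the actual regularity one needs a separate ingredient, and the natural one is Heinz's theorem \cite{He}, which is tailored to the quadratic-gradient structure of \eqref{laplacianos}. Your first-order route could perhaps be made to work, but the system you write is genuinely nonlinear (the Beltrami coefficients depend on $p,q,z$, which are not yet known to be even continuous up to $v=0$), so standard $\bar\partial$-boundary-Schauder estimates do not apply directly; you would in effect be forced back to the second-order quadratic-gradient formulation to start the bootstrap. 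Likewise, the analyticity step needs more than a generic Morrey-type reference: one must handle the mixed Dirichlet/Neumann boundary condition on $(\psi,\phi)$, which is why the paper appeals to \cite{Mu}.
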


\begin{proof}[Proof of Lemma \ref{extanalitica}]
The proof in an adaptation of Claim 1 in \cite{GJM2} to a more
general context.

To start with, we follow a bootstrapping method. Consider an arbitrary
point of $\R$, which we will suppose without loss of generality to
be the origin. Also, consider for $0<\delta<R$ the domain
$\D^+=\{(u,v): 0<u^2+v^2<\delta^2, v>0  \}$.

From  \eqref{sist} it follows that

 \beq \label{d1}\begin{array}{l}
  p_u=\sqrt{ \cD }y_v+ B y_u- C x_u,\\
  p_v=-\sqrt{\cD}y_u+ B y_v- C x_v,\\
  q_u=-\sqrt{ \cD }x_v+ B x_u- A y_u,\\
  q _v=\sqrt{ \cD }x_u+ B x_v- A y_v.\\
  \end{array} \eeq
And by derivation in \eqref{d1} we obtain (cf. \cite{HeB})
\beq\label{laplacianos}\begin{array}{lcl}
\Delta x&=&h_1(x_u^2+x_v^2)+h_2(x_uy_u+x_vy_v)+h_3(y_u^2+y_v^2)+h_4(x_uy_v-x_vy_u),\\
\Delta
y&=&\tilde{h}_1(x_u^2+x_v^2)+\tilde{h}_2(x_uy_u+x_vy_v)+\tilde{h}_3(y_u^2+y_v^2)+\tilde{h}_4(x_uy_v-x_vy_u),\end{array}\eeq
where the coefficients   $h_1=h_1(x,y,z,p,q),\ldots
,\tilde{h}_4=\tilde{h}_4(x,y,z,p,q)$ are
\beq\label{coefhi} \begin{array}{lcl}h_1&=&B_q-\frac{1}{2\cD}(\cD_x+\cD_z p-\cD_p C+\cD_qB),\\
h_2&=&-A_q-B_p-\frac{1}{2\cD}(\cD_y+\cD_zq+\cD_pB-\cD_qA),\\
h_3&=&A_p,\\
h_4&=&\frac{1}{\sqrt{\cD}}(A_x+B_y+A_zp+B_zq-A_pC+(A_q+B_p)B-B_qA-\frac{1}{2}\cD_p),\\
\tilde{h}_1&=&C_q,\\
\tilde{h}_2&=&-B_q-C_p-\frac{1}{2\cD}(\cD_x+\cD_z p- \cD_p C+\cD_qB),\\
\tilde{h}_3&=&B_p-\frac{1}{2\cD}(\cD_y+\cD_z q+\cD_pB-\cD_qA),\\
\tilde{h}_4&=&\frac{1}{\sqrt{\cD}}(C_y+B_x+C_z q+B_z
p-B_pC+(B_q+C_p)B-C_q A-\frac{1}{2}\cD_q),
\end{array}\eeq
all of them evaluated at ${\textbf z}(u,v)=(x,y,z,p,q)(u,v)$.
Besides, from the Cauchy-Scwharz inequality we get $|x_uy_v-x_v
y_u|\leq \frac{1}{2}(|D x|^2+|D y|^2)$ and $|x_uy_u+x_v y_v|\leq
\frac{1}{2}(|D x|^2+|D y|^2)$.

Hence, letting $Y=(x,y):\D^+\longrightarrow\Omega$, we get from
\eqref{laplacianos} and the fact that $h_1,\ldots ,\tilde{h}_4$ are
bounded (since $\overline{\mathfrak{H}}$ is a compact subset of
$\mathcal{U}$) that \beq\label{ineqlaplac}|\Delta Y|\leq c(|D
x|^2+|D y|^2) \eeq for some constant $c>0$.

Note that $Y\in C^2(\D^+)\cap C^0(\overline{\D^+})$ with
$Y(u,0)=(0,0)$ for all $u$. Hence, we can deduce from Heinz's
Theorem in \cite{He} that $Y\in
C^{1,\alpha}(\overline{\D^+_{\varepsilon}})$  for all
$\alpha\in(0,1)$, where $\D^+_{\varepsilon}=\D^+\cap
B(0,\varepsilon)  $ for some $0<\varepsilon<\delta$.

As the right hand side terms in \eqref{d1} are bounded in
$\overline{\D^+_{\varepsilon}}$, then $p,q\in
W^{1,\infty}(\overline{\D^+_{\varepsilon}})$. Therefore, $p,q\in
C^{0,1}(\overline{\D^+_{\varepsilon}})$ (see \cite[p. 154]{GiTr}).

Once here, noting that \beq\label{zfacil0}
 z_u=px_u+qy_u, \qquad
 z_v=px_v+qy_v,
 \eeq
we obtain $z\in C^{1,\alpha}(\overline{\D^+_{\varepsilon}})$
$\forall \alpha\in (0,1)$. Thus, the right hand side functions in
\eqref{d1} are Hölder continuous of any order in
$\overline{\D^+_{\varepsilon}}$, and so $p,q\in
C^{1,\alpha}(\overline{\D^+_{\varepsilon}})$ $\forall
\alpha\in(0,1)$.

With this, we get from \eqref{laplacianos}  that $\Delta Y$ is
Hölder continuous in $\overline{\D^+_{\varepsilon}}$. A standard
potential analysis argument (see \cite[Lemma 4.10]{GiTr}) ensures
that $x,y\in C^{2,\alpha}(\overline{\D^+_{\varepsilon/2}})$, and by
formula \eqref{d1} we have $p,q\in
C^{2,\alpha}(\overline{\D^+_{\varepsilon/2}})$. So, from
\eqref{zfacil0}, $z\in
C^{2,\alpha}(\overline{\D^+_{\varepsilon/2}})$.

By applying the same argument to $Y_u$ and  $Y_v$, we obtain that
$x,y,z,p,q\in C^{3,\alpha}(\overline{\D^+_{\varepsilon/4}})$. A
recursive process then shows that ${\textbf z}=(x,y,z,p,q)$ is
$C^{k,\alpha}$ $\forall \alpha\in(0,1)$ (resp. $C^{\8}$) at the
origin. As we can do the same argument for all points of $\R$ and
not just the origin, we conclude that ${\textbf z}(u,v)\in
C^{k,\alpha} (\Gamma_R\cup \R)$ (resp.  ${\textbf z}(u,v)\in C^{\8}
(\Gamma_R\cup \R)$).

Finally suppose that  $ A, B, C, E$ are analytic. A computation in
the same spirit of formula \eqref{laplacianos} shows that the
Laplacians of $z,p,q$ are given by: \beq\label{laplacianos2}
\def\arraystretch{1}\begin{array}{lll}
\Delta p&=&(\sqrt{\cD\circ {\textbf z}})_uy_v-(\sqrt{\cD\circ {\textbf z}})_vy_u+(B\circ {\textbf z})_uy_u+(B\circ {\textbf z})_vy_v\\
& & +(B\circ {\textbf z})\Delta y-(C\circ {\textbf z})\Delta
x-(C\circ {\textbf
z})_ux_u-(C\circ {\textbf z})_vx_v,\\ & & \\

\Delta q&=&-(\sqrt{\cD\circ {\textbf z}})_ux_v+(\sqrt{\cD\circ
{\textbf z}})_vx_u+(B\circ {\textbf z})_u x_u+(B\circ {\textbf z})_v
x_v \\ & &+(B\circ {\textbf z})\Delta x-(A\circ {\textbf z})\Delta
y-(A\circ
{\textbf z})_uy_u-(A\circ {\textbf z})_v y_v, \\ & & \\
\Delta z&=&p_ux_u+p_vx_v+q_uy_u+q_vy_v+p\Delta x+q \Delta y.\\

\end{array}\eeq

Therefore, $\textbf{z} (u,v)$ satisfies \beq\label{mixto} \Delta
\textbf{z} = h(\textbf{z},\textbf{z}_u,\textbf{z}_v)\eeq where
$h:\mathcal{O}\subset \R^{15}\flecha \R^5$ is a real analytic
function on an open set $\mathcal{O}$ of $\R^{15}$ containing the
closure of the bounded set $\{({\textbf z},{\textbf z}_u,{\textbf
z}_v)(u,v): (u,v)\in \Gamma_R\}$. Moreover, noting that
$$\textbf{z} (u,v)=(\psi(u,v),\phi(u,v)):\Gamma_R\flecha \R^3\times \R^2\equiv \R^5$$ where $\psi(u,v)=(x(u,v),y(u,v),z (u,v))$   and $\phi(u,v)=(p(u,v),q(u,v))$, we find that
${\textbf z}(u,v)$ is a solution to \eqref{mixto} that satisfies the
mixed initial conditions

$$\left\{ \def\arraystretch{1.5} \begin{array}{l} \psi(u,0)=(0,0,0), \\ \phi_v(u,0)^T=\left(
                                                \begin{array}{ccc}
                                                  -C(0,0,0,\phi(u,0)) & B(0,0,0,\phi(u,0)) & 0 \\
                                                  B (0,0,0,\phi(u,0))& -A (0,0,0,\phi(u,0))& 0 \\
                                                \end{array}
                                              \right)\psi_v(u,0)^T.\end{array}\right.$$
As we have shown that ${\textbf z}\in C^{\8}(\Gamma_R\cup \R)$, we
are in the conditions to apply Theorem 3 in \cite{Mu} to
$\textbf{z}$ around every point in $\R$, and we can conclude that
$\textbf{z}$ is real analytic in $\Gamma_R\cup\R$. \end{proof}

\begin{definicion}
Let $z(x,y)$ be a singular solution to \eqref{eq1}. We define the
\emph{limit gradient} of $z$ at the origin to be the set
$\gamma\subset \R^2$ of points $\xi\in \R^2$ for which there is a
sequence $q_n\to (0,0)$ in $\Omega$ such that $(z_x,z_y)(q_n)\to
\xi$.
\end{definicion}

The following theorem is the main result of this section, and
describes the geometry of the limit gradient of a singular solution
to \eqref{eq1} that satisfies the HeB-condition, when the
coefficients of \eqref{eq1} are real analytic.

\begin{teorema}\label{regama}
Let $z(x,y)$ be a singular solution to \eqref{eq1} that satisfies
the HeB-condition, and assume that the coefficients $A,\ B,\ C,\ E$
of \eqref{eq1} are real analytic. Let $\gamma$ denote the limit
gradient of $z$ at the origin.

Then, $\gamma$ is a regular, convex, real analytic Jordan curve in
$\mathcal{H}\subset\R^2$.
\end{teorema}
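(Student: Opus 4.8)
\emph{Plan of proof of Theorem \ref{regama}.} The plan is to read $\gamma$ off the boundary values of the analytic parametrization produced in Lemma \ref{extanalitica}, to deduce its regularity from the boundary behaviour of the Beltrami system, and to obtain its simplicity and convexity from the auxiliary convex graph of Lemma \ref{lemconvex}. First I would fix notation: by Lemma \ref{extanalitica} the map ${\bf z}(u,v)=(x,y,z,p,q)$ of \eqref{zneg} extends real-analytically, and $2\pi$-periodically in $u$, to $\Gamma_R\cup\R$, with $\psi(u,0)=(0,0,0)$ while $\psi|_{\Gamma_R}$ is an embedding onto the graph $G$. I would then check that $\gamma$ is exactly the image of the real-analytic $2\pi$-periodic loop $\beta(u):=(p(u,0),q(u,0))$, and that $\gamma\subset\cH$. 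Indeed, a sequence $q_n\to(0,0)$ in $\Omega$ corresponds to points $(u_n,v_n)\in\Gamma_R$ with $\psi(u_n,v_n)\to(0,0,0)$, which forces $v_n\to0$ (since $\psi$ never attains $(0,0,0)$ on $\Gamma_R$ and maps the remaining boundary $\{v=R\}$ onto the compact outer boundary of $G$); hence $(z_x,z_y)(q_n)=(p,q)(u_n,v_n)$ accumulates, by continuity and compactness of $\R/2\pi\Z$, precisely on $\beta(\R/2\pi\Z)$, and conversely every $\beta(u)$ arises as such a limit. Also ${\bf z}(u,0)=(0,0,0,\beta(u))\in\overline{\mathfrak H}\subset\cU$ has its first three coordinates zero, so $\beta(u)\in\cH$. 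Thus $\gamma$ is already a real-analytic closed curve in $\cH\subset\R^2$, and it remains only to prove that it is regular, simple and convex.

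For the regularity I would differentiate the Beltrami relations \eqref{d1} and use that $x(u,0)=y(u,0)=z(u,0)\equiv0$; this expresses $\beta'(u)$ at $v=0$ through the conformal factor of the metric \eqref{metrica} and $\nabla u$, and one must rule out that it vanishes. The cleanest route is to show that the tangent vector $(x_v,y_v)(u,0)$ is nowhere zero along $\{v=0\}$ --- that is, the surface genuinely spreads out near the singularity in every direction --- which gives $\beta'(u)\neq0$ since $\beta'(u)$ turns out to be proportional to $(y_v,-x_v)(u,0)$; this is the analogue for \eqref{eq1} of the corresponding step in \cite{GJM2}, and rests on the positive-definiteness of \eqref{metrica}. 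Once $\beta$ is a regular analytic loop, $\gamma$ is a regular, real-analytic, immersed closed curve in $\cH$.

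To see that $\gamma$ is simple and convex I would pass to the convex model of Lemma \ref{lemconvex}: for suitable $a,c>0$ the function $z^*$ of \eqref{zconvex} has $D^2z^*>0$ on $\Omega$; being continuous at the origin, it is a strictly convex function on the whole disc, and we normalize $z^*(0,0)=0$. Since $\nabla z^*=\nabla z+(\ep ax,\ep cy)$ and the added term vanishes at the origin, $z$ and $z^*$ have the same limit gradient $\gamma$, with $\beta(u)=(z^*_x,z^*_y)(u,0)$. Let $K\subset\R^2$ be the subdifferential of $z^*$ at the origin, i.e. the compact convex set of $\xi$ with $z^*\geq\langle\xi,\cdot\rangle$ near $0$. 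Then $\gamma\subset\partial K$ --- limits of subgradients are subgradients, and for $\xi$ in the interior of $K$ the strict supporting inequality prevents $\nabla z^*(q_n)$ from converging to $\xi$ --- and $K=\mathrm{conv}(\gamma)$ because $z^*$ is convex and locally Lipschitz near $0$, so that every extreme point of $K$ lies on $\gamma$.

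The point I expect to be the main obstacle is to show that $K$ is a \emph{strictly} convex compact body with non-empty interior. It cannot be a point, for then $z^*$, hence $z$, would be differentiable at the origin and the singularity would be removable, against the hypothesis. And neither $K$ nor any segment contained in $\partial K$ can be one-dimensional: such a degeneracy would mean that $z^*$ has a ridge along a curve issuing from the origin, which is incompatible with $z^*\in C^2(\Omega)$ together with the \emph{strict} inequality $\det D^2z^*>0$ on the punctured disc, a ridge forcing $\det D^2z^*$ to vanish at, or $z^*$ to fail to be $C^2$ at, interior points of $\Omega$. This is precisely where strict ellipticity \eqref{elliptic} enters; the analogous argument for the pure case $\cA=0$ is carried out in \cite{GJM2}, and the role of Sections \ref{newsec3}--\ref{secgrad} is to make the necessary analytic ingredients (Lemmas \ref{extanalitica} and \ref{lemconvex}, and the Beltrami system \eqref{sist}--\eqref{d1}) available for general $\cA$. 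Granting this, $\partial K$ consists entirely of extreme points of $K$, so $\gamma=\partial K$; hence the regular analytic immersed loop $\gamma$ bounds the strictly convex body $K$, which forces it to be embedded --- a Jordan curve --- with curvature of constant sign. This would complete the proof of Theorem \ref{regama}. $\hfill\Box$
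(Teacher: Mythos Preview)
Your identification of $\gamma$ with the image of $\beta(u)=(p(u,0),q(u,0))$ is correct, and your relation $\beta'(u)=\sqrt{\cD}\,(y_v,-x_v)(u,0)$ follows immediately from \eqref{d1} since $x_u(u,0)=y_u(u,0)=0$. The genuine gap is the step where you assert that $(x_v,y_v)(u,0)$ never vanishes. You attribute this to ``positive-definiteness of \eqref{metrica}'', but that metric is only defined on the punctured disc $\Omega$ and its conformal factor in the $(u,v)$ coordinates tends to zero as $v\to 0$ (since $\psi(u,0)\equiv 0$ forces $g_{uu}(u,0)=0$); so nothing in the conformal structure alone prevents $\psi_v(u_0,0)=0$ at isolated points. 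In the paper this is precisely the hard step: one first uses the nodal structure of the elliptic system \eqref{laplacianos} to bound the number of possible zeros of $\beta'$, then proves convexity of $\gamma$ via the Legendre transform of the convexified graph $z^*$, and only then excludes any zero of $\beta'$ by a rotation trick --- for a putative singular $u_0$ one chooses an angle $\theta$ so that the nodal curves of the rotated coordinate functions $x_\theta,y_\theta$ (which satisfy the analogous system \eqref{laplacianosteta}) would have to cross the real axis at $(u_0,0)$, contradicting the count of nodal curves coming from the strict convexity of the level curves of the Legendre transform. Your sketch contains none of this machinery.

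Your convexity/simplicity argument via the subdifferential $K=\partial z^*(0)$ is a reasonable alternative to the paper's Legendre-transform picture (the two are dual), but it too is incomplete as written. The deduction that a segment in $\partial K$ forces a ``ridge'' of $z^*$ at interior points of $\Omega$, and hence a zero of $\det D^2 z^*$ or a failure of $C^2$, is not justified: a flat face of the subdifferential at the origin constrains the first-order behaviour of $z^*$ at $0$, not its second derivatives away from $0$. Moreover, even granting strict convexity of $K$, to conclude $\gamma=\partial K$ you still need that $\gamma$ is a regular embedded curve covering all of $\partial K$, which brings you back to the unresolved regularity issue. In short, the architecture of your proof is reasonable, but the key analytic step --- ruling out $\beta'(u_0)=0$ --- is asserted rather than proved, and the paper's argument shows that this step genuinely requires the interplay between the nodal analysis of \eqref{laplacianos} and the global convex geometry of the Legendre dual.
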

\begin{proof}
Let $\gamma\subset \R^2$ denote the limit gradient of $z(x,y)$. By
Lemma \ref{extanalitica} we can extend the map $(p(u,v),q(u,v))$
analytically to $\Gamma_R\cup \R$, and so we may consider the
$2\pi$-periodic map $(\alfa(u),\beta(u)):=(p(u,0),q(u,0))$. As it is
clear that $\gamma=\{(\alfa(u),\beta(u)):u\in \R\}$, we can conclude
that $\gamma$ is a closed curve in $\R^2$, possibly with
singularities. In particular, we may parameterize $\gamma$ in an
analytic, $2\pi$-periodic way, as $\gamma(u)=(\alfa(u),\beta(u))$.
\begin{claim}
\label{gammarega} $\gamma'(u)$ vanishes, at most, at four points in
$[0,2\pi)$.
\end{claim}

\begin{proof}[Proof of Claim \ref{gammarega}]

By contradiction, assume that $\gamma'(u_i)=(0,0)$ for $u_i\in
[0,2\pi)$, $i=1,\dots, 5$. Observe that $p_u (u_i,0)= q_u (u_i,0)=0$
for $i=1,\dots, 5$. Then, since $x(u,0)=y(u,0)=0$ for every $u\in
\R$, by \eqref{d1} we see that $Dx (u_i,0)=Dy(u_i,0)=0$, $i=1,\dots,
5$. Consider now the Taylor series expansions of $x(u,v)$ and
$y(u,v)$ at one of the singular points $(u_i,0)$, and denote by
$P_x(u,v)$ and $P_y(u,v)$ their respective lower order terms. Note
that both $P_x(u,v)$ and $P_y(u,v)$ are homogeneous polynomial of
degree at least two.

Next, recall that $x(u,v)$ and $y(u,v)$ satisfy the elliptic system
of PDEs \eqref{laplacianos}. By comparing power series expansions in
this equation, it is easily seen that the  polynomial of least
degree among $P_x(u,v)$ and  $P_y(u,v)$ must be harmonic (if they
have the same degree, both of them are harmonic). Assume momentarily
that it is $P_x(u,v)$. Hence the first term in the power series
expansion of $x(u,v)$ at $(u_i,0)$ is a harmonic polynomial of
degree $\geq 2$. In these conditions, it is well-known that the
$v=0$ axis (which is a nodal curve of $x(u,v)$) is crossed at
$(u_i,0)$ by at least another nodal curve of $x(u,v)$. If the least
degree polynomial were $P_y(u,v)$, the same could be said about
$y(u,v)$.

As there are five points $(u_i,0)$ by hypothesis, the previous
argument shows the following fact: for at least one of
$f(u,v)=x(u,v)$ or $f(u,v)=y(u,v)$, the $v=0$ axis is crossed
transversely by some nodal curve of $f(u,v)$ at three or more points
$(u_i,0)$.

Assume for the moment that $f(u,v)=x(u,v)$; if $f(u,v)=y(u,v)$ the
argument is analogous.

By construction, the map \eqref{grafconf} is a diffeomorphism from
$\Gamma_R =\Sigma_R /(2\pi \Z)$ into the graph $G=\{(x,y,z(x,y)):
(x,y)\in \Omega\}$. As $G$ is a graph, $G\cap \{x=0\}\subset \R^3$
is formed by exactly two regular curves with an endpoint at the
singularity $(0,0,0)$. In other words, the function $x(u,v)$ only
has two nodal curves in $\Gamma_R$, which is a contradiction. This
shows that $\gamma'(u)$ vanishes at most at four points in
$[0,2\pi)$. This finishes the proof of Claim \ref{gammarega}.
\end{proof}

\begin{claim}
\label{gammacon} $\gamma(\R)\subset \R^2$ bounds a compact, strictly
convex set of $\R^2$.
\end{claim}
\begin{proof}[Proof of Claim \ref{gammacon}]
Consider $z^*(x,y)$ as in \eqref{zconvex}, where the constants
$a,c>0$ satisfy the conditions of Lemma \ref{lemconvex}. Hence,
$z^*$ has positive Hessian at every point and an isolated
singularity at the origin whose limit gradient is $\gamma$. For simplicity, assume for the proof of this Claim that $\ep=1$; the case $\ep=-1$ can be proved
analogously.

A parametrization of the graph of $z^*(x,y)$ is given by
$\psi^*:\Gamma_R\flecha \R^3$,
$$\psi^*(u,v)= \left(x(u,v),y(u,v),z(u,v)+\frac{c}{2}
x(u,v)^2+\frac{a}{2} y(u,v)^2\right).$$ The upwards-pointing unit
normal of $\psi^*$ in $\Gamma_R$ is given by
$$N^*(u,v)=\frac{1}{\sqrt{1+(p+cx)^2+(q+ay)^2}}(-p-cx,-q-ay,1),$$ where
$x,y,p,q$ are evaluated at $(u,v)$. Note that $\psi^*$ extends
analytically to $\R$.

We now consider the analytic \emph{Legendre transform} of
$\psi^*(u,v)$ (see \cite[p. 89]{LSZ}):
\begin{equation}\label{legmap}
\cL (u,v)= \left(-\frac{N_1^*}{N_3^*},-\frac{N_2^*}{N_3^*},-x
\frac{N_1^*}{N_3^*} -y \frac{N_2^*}{N_3^*} -
z^*\right):\Gamma_R\flecha \R^3,
\end{equation}
where $N^*=(N_1^*,N_2^*,N_3^*)$. Since $\psi^*(\Gamma_R)$ is a
locally strictly convex graph, it is well known that so is
$\cL(\Gamma_R)$.

The upwards-pointing unit normal of $\cL$ is
 \begin{equation}\label{norleg}
\mathcal{N}_{\cL}= \frac{(-x,-y,1)}{\sqrt{1+x^2+y^2}}
:\Gamma_R\flecha \S_+^2,
 \end{equation}
where $x,y$ are evaluated at $(u,v)$. Note that $\cL$ is locally
strictly convex and extends analytically to $\R$ with $\cL(u,0)$
lying on the horizontal plane $z=0$, that $\mathcal{N}_{\cL}
(u,0)=(0,0,1)$ and that $z_{xx}^*>0$. This shows that, considering a
smaller $R>0$ if necessary, $\cL(\Gamma_{R})$ lies on the upper
half-space of $\R^3$.

In this way, the intersection of $\cL(\Gamma_R)$ with any horizontal
plane $z=\varepsilon$ for $\varepsilon >0$ small enough is a
regular, strictly convex planar Jordan curve. Also,
$\cL(u,0)=(\gamma(u),0)$ is the limit of those horizonal convex
intersection curves. As $\gamma(u)$ is analytic and non-constant (by
Claim 1), we deduce then that $\gamma(\R)$ bounds a compact strictly
convex set of $\R^2$. This proves Claim \ref{gammacon}.

\end{proof}

\begin{claim}
\label{gammareg} $\gamma$ is a regular curve, that is,
$\gamma'(u)\neq (0,0)$ for every $u\in \R$.
\end{claim}

\begin{proof}[Proof of Claim \ref{gammareg}]
Consider for $\theta\in [0,2\pi)$ the vertical half-plane
$\Pi_{\theta}$ of $\R^3$ with boundary the $z$-axis and which
contains the vector $(\cos \theta, \sin \theta, 0)$. Then, the
intersection of $\Pi_{\theta}$ with the graph $z=z(x,y)$, $(x,y)\in
D^*$, is a regular, real analytic curve, which can be seen as $\psi
(\delta_{\theta})$ where $\delta_{\theta}$ is the real analytic
curve in $\Gamma_R$ given by $$\delta_{\theta}= \{(u,v)\in \Gamma_R
: (x(u,v),y(u,v)) =\landa(u,v) (\cos\theta, \sin \theta) \text{ for
some } \landa (u,v)>0\}.$$

Consider now the map $\cL(u,v):\Gamma_R\flecha \R^3$ defined in
\eqref{legmap}, and let us denote
$\beta_{\theta}:=\cL(\delta_{\theta})$. Note that $\beta_{\t}$ is a
regular, real analytic curve in $\cL(\Gamma_R)$; moreover, it
follows from \eqref{norleg} that $\cL(u,v)\in \beta_{\t}$ if and
only if $-\cN_{\cL} (u,v)\in \S^2\cap \Pi_{\t}$.

Let now $\gamma_{\ep}$ denote the intersection of $\cL(\Gamma_R)$
with the horizontal plane $z=\ep$ in $\R^3$. Then there is some
$\ep_0>0$ small enough such that, for every $\ep \in (0,\ep_0]$,
$\gamma_{\ep}$ is a regular strictly convex Jordan curve, since
$\cL(\Gamma_R)$ is a graph of positive curvature that lies on the
upper half-space of $\R^3$ (see the proof of Claim \ref{gammacon}).
In particular, there exist exactly two points in $\gamma_{\ep}$
where the tangent lines to $\gamma_{\ep}$ are parallel to $(-\sin
\t, \cos \t, 0)$. At these points $\cN_{\cL}$ is orthogonal to
$(-\sin \t, \cos \t, 0)$, which means that either $\cN_{\cL} \in
\S^2\cap \Pi_{\t}$ or $-\cN_{\cL} \in \S^2\cap \Pi_{\t}$ at each of
these points, and that there are no more points in $\gamma_{\ep}$
with this property.

In other words, $\gamma_{\ep}$ intersects $\beta_{\t}\cup
\beta_{\t+\pi}$ at exactly two points, so by continuity we deduce
that for every $\theta\in [0,2\pi)$ and every $\ep\in (0,\ep_0]$
there is some $p_{\ep,\t} \in \R^3$ such that $$\gamma_{\ep}\cap
\beta_{\t} = \{p_{\ep,\t}\}.$$

As explained in the proof of Claim \ref{gammacon}, $\gamma$ is the
limit of the curves $\gamma_{\ep}$ as $\ep \to 0$, and since
$\gamma$ is strictly convex there are exactly two points in $\gamma$
at which the tangent lines to $\gamma$ are parallel to $(-\sin \t,
\cos \t, 0)$. As for every $\ep \in (0,\ep_0]$ the points of
$\gamma_{\ep}$ with that property are exactly
$\{p_{\ep,\t},p_{\ep,\t+\pi}\}$, which lie respectively in
$\beta_{\t}$ and $\beta_{\t+\pi}$, we can deduce that the regular
real analytic curve $\beta_{\t}$ extends continuously to the
boundary of $\cL(\Gamma_R)$. Now, since $\cL(u,v)$ is an immersion
not only in $\Gamma_R$ but also in $\Gamma_R\cup \R$, and since by
definition $\beta_{\t}=\cL(\delta_{\t})$, we deduce that
$\delta_{\t}$ extends continuously to $\Gamma_R\cup \R$ for every
$\t \in [0,2\pi)$.

That is, for every $\theta\in [0,2\pi)$ there is a unique $u_{\t}\in
[0,2\pi)$ such that $\delta_{\t}\cup \{(u_{\t},0)\}$ is a continuous
curve in $\Gamma_R\cup \R$.

We can now finish the proof that $\gamma(u)$ is regular. Assume that
$\gamma'(\overline{u_0}) =(0,0)$ for some $\overline{u_0}\in
[0,2\pi)$. Choose $\t\in [0,2\pi)$ such that
$\overline{u_0}\not\in\{u_{\t},u_{\t+\pi},u_{\t\pm \pi/2}\}$, and
define
$$x_{\theta} (u,v):= \cos \theta x(u,v) +\sin \theta y(u,v),
\hspace{1cm} y_{\theta} (u,v):= -\sin \theta x(u,v) + \cos \theta
y(u,v).$$ Clearly, $x_{\t}(u,0)=y_{\t}(u,0)=0$ for every $u\in \R$,
and from \eqref{d1} we get that $Dx_{\theta} (\overline{u_0},0)=D
y_{\theta} (\overline{u_0},0)=(0,0)$.

Besides, a computation using \eqref{laplacianos} shows that
$x_{\theta} (u,v)$ and $y_{\theta} (u,v)$ satisfy the system of
elliptic PDEs
\begin{equation}\label{laplacianosteta}
   \begin{array}{lcl}
  \Delta x_{\theta}&=& H_1 ((x_{\theta})_u^2+(x_{\theta})_v^2)+H_2 ((x_{\theta})_u (y_{\theta})_u
  +(x_{\theta})_v(y_{\theta})_v) \\&& \\&&+H_3 ((y_{\theta})_u^2+(y_{\theta})_v^2)+H_4 ((x_{\theta})_u (y_{\theta})_v- (x_{\theta})_v (y_{\theta})_u )\\  &&\\
  \Delta y_{\theta}&=& \tilde{H}_1 ((x_{\theta})_u^2+(x_{\theta})_v^2)+\tilde{H}_2 ((x_{\theta})_u (y_{\theta})_u
  +(x_{\theta})_v(y_{\theta})_v) \\&& \\&&+\tilde{H}_3 ((y_{\theta})_u^2+(y_{\theta})_v^2)+\tilde{H}_4 ((x_{\theta})_u (y_{\theta})_v- (x_{\theta})_v (y_{\theta})_u ).
 \end{array}\end{equation}

Here, the coefficients $H_i:=H_i(u,v),
\tilde{H}_i:=\tilde{H}_i(u,v)\in C^{\omega}(\Gamma_R\cup \R)$ are
given in terms of the functions in \eqref{coefhi} by

$$\begin{array}{lcl}
H_1&=& (h_1\circ {\bf z})(\cos \theta)^3+(\tilde{h}_1\circ {\bf z}+h_2\circ {\bf z})(\cos \theta)^2\sin \theta\\&&+(h_3\circ {\bf z}+\tilde{h}_2\circ {\bf z})\cos \theta(\sin \theta)^2 +(\tilde{h}_3\circ {\bf z})(\sin \theta)^3,\\
H_2&=&  (h_2\circ {\bf z})(\cos \theta)^3+(-2h_1\circ {\bf z}+\tilde{h}_2\circ {\bf z}+2h_3\circ {\bf z})(\cos \theta)^2\sin \theta\\
&&-(2\tilde{h}_1\circ {\bf z}+h_2\circ {\bf z}-2\tilde{h}_3\circ {\bf z})\cos \theta(\sin \theta)^2 -(\tilde{h}_2\circ {\bf z})(\sin \theta)^3,\\
H_3&=& (h_3\circ {\bf z})(\cos \theta)^3+(\tilde{h}_3\circ {\bf z}-h_2\circ {\bf z})(\cos \theta)^2\sin \theta\\&&+(h_1\circ {\bf z}-\tilde{h}_2\circ {\bf z})\cos \theta(\sin \theta)^2 +(\tilde{h}_1\circ {\bf z})(\sin \theta)^3,\\
H_4&=& (h_4\circ {\bf z})(\cos \theta)^3+(\tilde{h}_4\circ {\bf z})(\cos \theta)^2\sin \theta+(h_4\circ {\bf z})\cos \theta(\sin \theta)^2 +(\tilde{h}_4\circ {\bf z})(\sin \theta)^3,
 \end{array}$$ and by
 
 $$\begin{array}{lcl}
\tilde{H}_1&=& (\tilde{h}_1\circ {\bf z})(\cos \theta)^3-(h_1\circ {\bf z}-\tilde{h}_2\circ {\bf z})(\cos \theta)^2\sin \theta\\&&+(\tilde{h}_3\circ {\bf z}-h_2\circ {\bf z})\cos \theta(\sin \theta)^2 -(h_3\circ {\bf z})(\sin \theta)^3,\\
\tilde{H}_2&=&  (\tilde{h}_2\circ {\bf z})(\cos \theta)^3-(2\tilde{h}_1\circ {\bf z}+h_2\circ {\bf z}-2\tilde{h}_3\circ {\bf z})(\cos \theta)^2\sin \theta\\
&&+(2h_1\circ {\bf z}-\tilde{h}_2\circ {\bf z}-2h_3\circ {\bf z})\cos \theta(\sin \theta)^2 +(h_2\circ {\bf z})(\sin \theta)^3,\\
\tilde{H}_3&=& (\tilde{h}_3\circ {\bf z})(\cos \theta)^3-(h_3\circ {\bf z}+\tilde{h}_2\circ {\bf z})(\cos \theta)^2\sin \theta\\&&+(\tilde{h}_1\circ {\bf z}+h_2\circ {\bf z})\cos \theta(\sin \theta)^2 -(h_1\circ {\bf z})(\sin \theta)^3,\\
\tilde{H}_4&=& (\tilde{h}_4\circ {\bf z})(\cos \theta)^3-(h_4\circ
{\bf z})(\cos \theta)^2\sin \theta+(\tilde{h}_4\circ {\bf z})\cos
\theta(\sin \theta)^2 -(h_4\circ {\bf z})(\sin \theta)^3.
\end{array} $$ Once here, we can repeat the argument that we did at the beginning of the proof with $x(u,v)$
and $y(u,v)$ using \eqref{laplacianos}, but this time applied to
$x_{\theta}(u,v)$, $y_{\theta} (u,v)$, and using
\eqref{laplacianosteta}. In this way, we conclude that at
$(\overline{u_0},0)$ the real axis is crossed transversely by
another nodal curve of either $x_{\theta}(u,v)$ or
$y_{\theta}(u,v)$. But now we may observe that, by definition of
$\delta_{\theta}$, the nodal set of $x_{\t}(u,v)$ in $\Gamma_R$ is
given by $\delta_{\t}\cup \delta_{\t+\pi}$. As we proved above, this
set can be continuously extended to $\Gamma_R\cup \R$, and the
intersection of $\R$ with this extended set is exactly
$\{(u_{\theta},0),(u_{\t+\pi},0)\}$. In the same way, the nodal set
of $y_{\theta}(u,v)$ in $\Gamma_R$ is $\delta_{\t+\pi/2}\cup
\delta_{\t-\pi/2}$, and its continuous extension intersects the
$v=0$ axis at $\{(u_{\theta+\pi/2},0),(u_{\t-\pi/2},0)\}$.

Since we had chosen $\t\in [0,2\pi)$ such that
$\overline{u_0}\not\in\{u_{\t},u_{\t+\pi},u_{\t\pm \pi/2}\}$,  we
conclude that no nodal curve of $x_{\theta}$ or $y_{\theta}$ can
cross the $u$-axis at $(\overline{u_0},0)$. This contradiction
finishes the proof of Claim \ref{gammareg} and shows that
$\gamma(u)$ is regular (and hence, real analytic).
\end{proof}
\end{proof}

\begin{observacion}
The previous proof shows that the limit gradient $\gamma$ at an
isolated singularity is a regular Jordan curve that is convex. In
particular, the curvature of $\gamma$ does not change sign.
Moreover, for every $p\in \gamma$ the intersection of $\gamma$ with
the tangent line of $\gamma$ at $p$ is exactly $\{p\}$. 

However, the curvature of $\gamma$ can vanish at isolated points. An
example of this phenomenon can be found in \cite[Remark 3]{GJM2}.
\end{observacion}

\section{Classification of isolated singularities of prescribed curvature in space forms}\label{sec:2}

In this section we use the results from Sections \ref{newsec3} and
\ref{secgrad} to classify the embedded isolated singularities of
prescribed, analytic, positive extrinsic curvature in
$3$-dimensional space forms. In Subsection \ref{subsec1} we state
the classification theorem. Subsections \ref{subsec2} and
\ref{subsec3} provide some needed auxiliary results on graphs in
warped products of constant curvature. In Subsection \ref{subsec4}
we prove the classification theorem.

\subsection{Definitions and statement of the classification
theorem}\label{subsec1}

Let $\M^3 =\M^3(c)$ be a $3$-dimensional Riemannian space form of
constant curvature $c\in \{-1,0,1\}$, i.e. $\M^3 = \R^3, \S^3$ or
$\H^3$ depending on whether $c=0,1$ or $-1$, respectively. We will
consider coordinates $(x,y,z)$ on $\M^3(c)$ by making the
identification $\M^3(c) \equiv (\Omega_c,\esiz,\esde)$ where
$\Omega_c=\R^3$ if $c=0,1$, $\Omega_c = \{(x_1,x_2,x_3):
x_1^2+x_2^2+x_3^2<4\}$ if $c=-1$, and $$\esiz, \esde =
\frac{1}{(1+\frac{c}{4}(x_1^2+x_2^2+x_3^2))^2} \,
(dx_1^2+dx_2^2+dx_3^2).$$ For $c=-1$ this is the usual Poincaré ball
model of $\H^3$. For $c=1$, the coordinates $(x_1,x_2,x_3)$ are
given after stereographic projection, and so are defined on
$\S^3\setminus \{\text{north}\}.$

\begin{definicion}\label{defieis}
Given a smooth positive function $\cK:\M^3\flecha (0,\8)$, by an
\emph{embedded isolated singularity of prescribed curvature $\cK$}
in $\M^3$ we mean an immersion $\psi:D\setminus \{q_0\}\flecha \M^3$
from a punctured disk into $\M^3$ such that:
\begin{enumerate}
  \item
The extrinsic curvature of $\psi$ at each point $a\in
\psi(D\setminus \{q_0\})$ is given by $\cK(a)$.
 \item
$\psi$ extends continuously but not $C^1$-smoothly to $D$.
 \item
$\psi$ is an embedding around $q_0$.
\end{enumerate}
\end{definicion}

Let us define for such a surface $\psi:D\setminus \{q_0\}\flecha
\M^3$ the following two notions:

\begin{enumerate}
\item
The \emph{canonical orientation} of $\psi$, that is, the orientation
associated to $\psi$ for which the second fundamental form of $\psi$
is positive definite at every point.
 \item
The \emph{limit unit normal} of $\psi$ at the singularity
$p_0=\psi(q_0)$, i.e. the set $\sigma\subset \{v\in T_{p_0} \M^3 :
|v|=1\}$ given as follows: $v\in T_{p_0} \M^3$ lies in $\sigma$ if
and only if there exists a sequence $\{q_n\}_n$ in $D\setminus
\{q_0\}$ converging to $q_0$ such that the unit normal vectors
$N(q_n)$ of $\psi$ at $q_n$ converge to $v$ in $T\M^3$.
\end{enumerate}

\begin{observacion}
From now on we will assume without loss of generality that the
singularity $p_0$ is placed at the origin $(0,0,0)$ in the model
$(\Omega_c,\esiz,\esde)$ for $\M^3$ explained above. In this way, as
$\esiz,\esde = dx_1^2+dx_2^2+dx_3^2$ at the origin, we see that
$\{v\in T_{(0,0,0)} \M^3 : |v|=1\}$ is canonically identified with
the sphere $\S^2 =\{(x_1,x_2,x_3): x_1^2+x_2^2+x_3^2=1\}$. In
particular, the limit unit normal of $\psi$ at the singularity will
be regarded as a subset of $\S^2$.
\end{observacion}

Once here, we can state the main result of this section.

 \begin{teorema}\label{teo: embedded}
Let $\mathcal{K}:\mathcal{O}\subset\M^3\flecha (0,\8)$ be a positive
real analytic function defined on an open  set $\mathcal{O}\subset
\M^3$ containing a given point $p_0\in\M^3$. Let $\cA_1$ denote the
class of all the canonically oriented surfaces $\Sigma$ in $\M^3$
that have $p_0$ as an embedded isolated singularity, and whose
extrinsic curvature at every point $a\in \Sigma\cap \mathcal{O}$ is
given by $\cK (a)$; here, we identify $\Sigma_1,\Sigma_2\in \cA_1$
if they overlap on an open set containing the singularity $p_0$.

Then, the map that sends each surface in $\cA_1$ to its limit unit
normal at the singularity provides a one-to-one correspondence
between $\cA_1$ and the class $\cA_2$ of regular, analytic, strictly
convex Jordan curves in $\S^2$.
 \end{teorema}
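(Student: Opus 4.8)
The plan is to build the correspondence in both directions and show they are mutual inverses. Given $\Sigma \in \cA_1$, represent it locally as a graph in a suitable warped product model of $\M^3(c)$: by the Gauss equation \eqref{ecgauss} prescribing $K_{\ext}=\cK$ is the same as prescribing $K_G = \cK + c$, and writing $\M^3(c)$ as a warped product $M^2\times_f\R$ (with $M^2$ a space form surface) makes the prescribed curvature equation an elliptic Monge--Amp\`ere equation of the form \eqref{eq0}, with analytic coefficients $\cA,\varphi$ depending on the model. Near the embedded isolated singularity $p_0$, after choosing the canonical orientation, the graph $z=z(x,y)$ is a singular solution to \eqref{eq1} in the sense of Definition \ref{def: bounded}: condition (2) there (compactness of $\overline{\mathfrak H}$ in $\cU$) follows from Theorem \ref{thap1}, which guarantees that a bounded graph of positive extrinsic curvature extends continuously with bounded gradient across $p_0$ — and here the surface is bounded because it is embedded around $p_0$ in $\M^3(c)$. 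The coefficients satisfy the HeB-condition: either by Lemma \ref{lem: estrella} if condition $(\star)$ holds for the space-form models, or by checking the Lipschitz bound directly using that $z,p,q$ are bounded. Hence Theorem \ref{regama} applies and the limit gradient $\gamma$ of $z$ is a regular, convex, analytic Jordan curve in $\cH\subset\R^2$. One then translates $\gamma$ geometrically: the limit gradient of the height function corresponds, via the explicit relation between the graph's gradient and its unit normal in the warped product model, to the limit unit normal $\sigma$ of $\psi$ at $p_0$, and the strict convexity of $\gamma$ (Claim \ref{gammacon}) plus regularity (Claim \ref{gammareg}) translate into $\sigma$ being a regular, analytic, strictly convex Jordan curve in $\S^2$. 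This gives the well-defined map $\cA_1\to\cA_2$.

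For surjectivity and injectivity I would use the analytic parametrization from Lemma \ref{extanalitica}: the map $\mathbf z(u,v)=(x,y,z,p,q)(u,v)$ extends real-analytically to $\Gamma_R\cup\R$, and its boundary values $(u\mapsto (x,y,z,p,q)(u,0)) = (0,0,0,\alpha(u),\beta(u))$ encode exactly the curve $\gamma$, i.e. exactly $\sigma$. Conversely, given a regular analytic strictly convex Jordan curve $\Gamma\subset\S^2$, one wants to produce a surface in $\cA_1$ whose limit unit normal is $\Gamma$. The natural strategy is a Cauchy--Kovalevskaya / characteristic initial value argument: the equation \eqref{mixto}, $\Delta\mathbf z = h(\mathbf z,\mathbf z_u,\mathbf z_v)$, with the mixed initial conditions $\psi(u,0)=(0,0,0)$ and the prescribed $\phi(u,0)=(\alpha(u),\beta(u))$ read off from $\Gamma$ (and the compatibility relation for $\phi_v(u,0)$ dictated by the Beltrami system), has by Theorem 3 in \cite{Mu} a unique analytic solution on a strip $\Gamma_R$ for $R$ small. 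One must then check this solution is genuinely a (graph of a) singular solution to \eqref{eq1}: that $(x,y)$ is a local diffeomorphism onto a punctured disk (using regularity of $\Gamma$ and the convexity, in the spirit of the Legendre-transform argument in Claim \ref{gammacon}), that $z(x,y)$ fails to be $C^1$ at the origin (otherwise $\gamma$ would be a point), that its extrinsic curvature is the prescribed $\cK$, and that the resulting surface in $\M^3(c)$ is embedded around $p_0$ (again via convexity — the surface lies on one side of each of its tangent geodesic planes, as in the proof of Theorem \ref{thap3}). For injectivity, two surfaces in $\cA_1$ with the same limit unit normal have, in the model, singular solutions with the same boundary data $(\alpha(u),\beta(u))$ up to reparametrization of $u$; the uniqueness part of Theorem 3 in \cite{Mu} (after normalizing the conformal parameter) forces the two analytic extensions $\mathbf z(u,v)$ to coincide on a strip, hence the surfaces overlap on a neighborhood of $p_0$ and are identified in $\cA_1$.

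The key technical points, and the main obstacle, lie in the warped-product reduction and the two uniqueness arguments. First, one must verify carefully that the prescribed-curvature equation in the chosen warped-product models of $\H^3$ and $\S^3$ really is of the form \eqref{eq00}/\eqref{eq1} with analytic coefficients on a set $\cU$ with $\cH\neq\emptyset$, and that the HeB-condition is met — this is exactly the gap that the authors emphasize is \emph{not} covered by \cite{GJM2}, since there $\cA\neq 0$. Second, and most delicate, is making the inverse construction land in $\cA_1$: prescribing the boundary curve $(\alpha,\beta)$ determines $\mathbf z$ via \cite{Mu}, but one must independently argue that the $v=0$ slice is collapsed to a single point $(0,0,0)$ in the $(x,y)$-plane and that $(x,y):\Gamma_R\to\Omega$ is an orientation-preserving diffeomorphism onto a punctured disk — this uses the strict convexity of $\Gamma$ crucially (it is what rules out the gradient curve degenerating) and parallels the nodal-curve/Legendre-transform analysis in Claims \ref{gammarega}--\ref{gammareg}. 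Third, one needs that changing the arbitrary $2\pi$-periodic analytic reparametrization of $u$ does not change the surface, so that the correspondence is genuinely well-defined on the quotient by the identification in $\cA_1$; this follows because the conformal parameter is intrinsic to the metric \eqref{metrica} up to a rotation of the $u$-coordinate. Everything else — extracting $\sigma$ from $\gamma$, checking convexity transfers correctly to $\S^2$, checking $K_{\ext}=\cK$ — is a matter of unwinding the explicit formulas for the warped-product models and can be done in routine fashion.
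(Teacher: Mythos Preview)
Your overall architecture matches the paper's: represent $\Sigma$ as a graph in a warped product model, apply the analytic machinery of Sections \ref{newsec3}--\ref{secgrad}, then use the Cauchy problem for \eqref{mixto} to handle surjectivity and injectivity. The surjectivity and injectivity arguments you sketch are essentially those of the paper (the paper outsources the existence step to \cite[Theorem 3(4)]{GJM2} rather than redoing the Cauchy--Kovalevskaya verification, but the idea is the same).

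However, there is a genuine gap in your well-definedness step. You assert that ``the strict convexity of $\gamma$ (Claim \ref{gammacon}) plus regularity (Claim \ref{gammareg}) translate into $\sigma$ being a regular, analytic, strictly convex Jordan curve in $\S^2$.'' But Theorem \ref{regama} and its constituent Claims \ref{gammacon}--\ref{gammareg} only show that $\gamma$ bounds a strictly convex \emph{set}; they do \emph{not} show that the curvature of $\gamma$ is nowhere zero. The paper explicitly notes in the Remark following Theorem \ref{regama} that the curvature of $\gamma$ can vanish at isolated points for general Monge--Amp\`ere equations. Since ``strictly convex Jordan curve in $\S^2$'' in the statement of Theorem \ref{teo: embedded} means nonvanishing geodesic curvature, and Lemma \ref{lemonzero} transfers this property back and forth between $\gamma$ and $\sigma$, you must prove that the geodesic curvature of $\gamma$ (equivalently of $\sigma$) never vanishes. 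This is precisely the content of the paper's Claim \ref{gammareg3}, and it requires a substantial additional argument that you have not supplied.

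The paper's method for this missing step is as follows. In the conformal coordinates $(u,v)$ one writes the first and second fundamental forms of $\psi$ as in \eqref{fundamxx}, introducing an analytic function $\omega$ with $\omega>0$ on $\Gamma_R$ and $\omega(u,0)=0$. The Gauss--Codazzi equations force $\omega$ to satisfy a second-order elliptic PDE of sinh-Gordon type, \eqref{laplaom}. A direct computation, \eqref{omegapos}, identifies $\omega_v(u,0)$ with a nonzero multiple of the geodesic curvature of the limit normal curve $\eta(u)=N(u,0)$. If $\omega_v(u_0,0)=0$ at some point, then the Hartman--Wintner nodal structure theorem \cite{HaWi} applied to \eqref{laplaom} would produce a nodal arc of $\omega$ entering $\Gamma_R$, contradicting $\omega>0$ there. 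Hence $\omega_v(u,0)\neq 0$ for all $u$, and the geodesic curvature of $\sigma$ is everywhere nonzero. This argument uses the specific geometric structure of the prescribed curvature problem (through the Gauss--Codazzi equations) and is not a consequence of the general Monge--Amp\`ere analysis in Section \ref{secgrad}.
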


\begin{observacion}
By a \emph{strictly convex} Jordan curve we mean a regular Jordan curve with
the property that its geodesic curvature is non-zero at every point.
\end{observacion}

\begin{observacion}
Theorem \ref{teo: embedded} generalizes \cite[Corollary 13]{GHM}
(which covers the case $\cK= {\rm const.}$ in $\R^3$), \cite[Theorem
4]{GJM2} (for arbitrarily prescribed analytic curvature in $\R^3$)
and \cite[Theorem 15]{GaMi} (for flat surfaces in $\H^3$).

\end{observacion}

\subsection{The prescribed curvature equation in warped
products}\label{subsec2}

Let $\cW\times_f \R$ be   a three-dimensional warped product space,
where $\cW\subset \R^2$ is a neighborhood of a point $p_0\in\cW$
endowed with a conformal metric $g=\lambda(dx^2+dy^2)$ for some
function $\lambda>0$. Then, a computation shows that the extrinsic
curvature $K_{\text{ext}}$ of an immersed graph $z=z(x,y)$ in
$\cW\times_f \R$ is given by the Monge-Ampère equation
\beq\label{general}
   Ar+2B s+C t +rt-s^2=E,\eeq
 for the coefficients

\beq\label{curvwarp}\begin{array}{ll}
A=\displaystyle{\frac{p\lambda_x}{2\lambda}-\frac{q\lambda_y}{2\lambda}-\frac{q^2f'}{f}-\frac{f'}{2}\lambda},&
B=\displaystyle{\frac{p\lambda_y}{2\lambda}+\frac{q\lambda_x}{2\lambda}+\frac{pqf'}{f}},\\
C=\displaystyle{-\frac{p\lambda_x}{2\lambda}+\frac{q\lambda_y}{2\lambda}-\frac{p^2f'}{f}-\frac{f'}{2}\lambda
},&\displaystyle{E=K_{\text{ext}}(f\lambda+p^2+q^2)^2-AC+B^2},
\end{array} \eeq  where $p:=z_x,\ q:=z_y,\ r:=z_{xx},\ s:=z_{xy},\ t:=z_{yy}$.

If we substitute the extrinsic curvature $K_{\text{ext}}$ in
\eqref{curvwarp}  by a smooth function $\cK(x,y,z,z_x,z_y)>0$, then
\eqref{general} becomes a general equation of Monge-Ampère type
\eqref{eq1}, which is elliptic since \beq\label{ellip}
\cD:=AC-B^2+E=K_{\text{ext}}(f\lambda+p^2+q^2)^2>0.\eeq

Therefore, the problem of prescribing the extrinsic curvature of
graphs in warped products depends on solving a general elliptic
equation of Monge-Ampère type.

We focus next on the three-dimensional space forms $\M^3=\M^3(c)$,
as explained in Subsection \ref{subsec1}. It is well known that
these spaces admit several expressions as warped product manifolds,
see e.g. \cite{GJM} for some of them. Here, we will use the
following ones:

\begin{enumerate}
\item[(i)] The hyperbolic $3$-space admits \emph{cylindrical
coordinates} given by the warped product model
 \begin{equation}\label{cilimodel}
 \left(\D_2\times \R, \frac{\cosh^2(z)}{(1-\frac{1}{4}(x^2+y^2))^2} (dx^2+dy^2) +
 dz^2 \right),
 \end{equation}
where $\D_2=\{(x,y): x^2+y^2<4\}$. In this model, the slices
$z={\rm constant}$ are totally geodesic \emph{parallel} hyperbolic
planes, while the vertical lines are geodesics orthogonal to these
totally geodesic planes. Clearly, we can build this model with
respect to any given totally geodesic hyperbolic plane of $\H^2$.

In this model, the graphs $z=z(x,y)$ correspond to geodesic graphs
over totally geodesic hyperbolic planes in the usual sense.
\item[(ii)] Analogously to model $(i)$, we can construct for the
$3$-sphere $\S^3$ a rotationally invariant warped product model, as
$$\S^3\setminus\{\text{north},\text{south}\}\equiv
(\S^2\times (-\pi/2,\pi/2),\cos^2(z) g_{\S^2}+dz^2),$$ where
$g_{\S^2}$ is the standard metric of $\S^2$. Again, this model can
be built with respect to any totally geodesic sphere in $\S^3$. Note
that after stereographic projection from $\S^2$ into $\R^2$, this
model can be written in coordinates as
 \begin{equation}\label{corestres}
\left(\R^2\times (-\pi/2,\pi/2), \frac{\cos^2
(z)}{(1+\frac{1}{4}(x^2+y^2))^2} \, (dx^2+dy^2) + dz^2 \right),
\end{equation}
on the complement of one half of a great circle in
$\S^3\setminus\{\text{north},\text{south}\}$. In these coordinates,
as happened in $(i)$, the graphs $z=z(x,y)$ correspond to geodesic
graphs over totally geodesic spheres of $\S^3$ in the usual sense.
\item[(iii)] In Euclidean space $\R^3$, any choice of orthogonal coordinates $(x,y,z)$
trivially provide warped product coordinates, as
 \begin{equation}\label{corr3}
\left(\R^2\times \R, dx^2+dy^2+dz^2\right).
\end{equation}
\end{enumerate}

Therefore, graphs of positive extrinsic curvature in a  space form
endowed with one of these warped metrics will satisfy
\eqref{general}-\eqref{curvwarp}.

Let us observe that a surface $\Sigma$ in $\M^3$ is a geodesic graph
over some totally geodesic surface $M^2\subset \M^3$ (i.e. $M^2$ is
a plane in $\R^3$, a totally geodesic $\H^2$ in $\H^3$ or a totally
geodesic $\S^2$ in $\S^3$) if and only if $\Sigma$ can be written as
a graph $z=z(x,y)$ for some coordinates $(x,y,z)$ as in
\eqref{cilimodel}, \eqref{corestres} or \eqref{corr3}, with the
surface $M^2$ corresponding in these coordinates to the slice $z=0$.

\subsection{Embedded isolated singularities and
graphs}\label{subsec3}

Let $\Sigma$ be an embedded isolated singularity of positive
extrinsic curvature in $\M^3=\M^3 (c)$, and let $p_0\in \M^3$ denote
the singular point of $\Sigma$.

\begin{lema}
$\Sigma$ is, around $p_0$, a geodesic graph over some totally
geodesic surface $M^2\subset \M^3$.
\end{lema}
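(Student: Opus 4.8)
The plan is to show that, after choosing an appropriate totally geodesic surface $M^2\subset\M^3$ passing through (or near) $p_0$, the surface $\Sigma$ can be written near $p_0$ as a graph $z=z(x,y)$ over $M^2$ in the warped coordinates of \eqref{cilimodel}, \eqref{corestres} or \eqref{corr3}. First I would invoke the limit unit normal $\sigma\subset\S^2$ of $\Sigma$ at $p_0$ defined in Subsection \ref{subsec1}. The key preliminary observation is that $\Sigma$ has positive extrinsic curvature, hence is locally convex around each regular point; since $\Sigma$ is embedded around the singularity $p_0$ (condition (3) of Definition \ref{defieis}), its limit unit normal $\sigma$ cannot be all of $\S^2$ — in fact one expects $\sigma$ to be contained in an open hemisphere. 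This is the natural analogue of the fact that the limit gradient $\gamma$ of a singular solution is a bounded convex Jordan curve (Theorem \ref{regama}): a locally strictly convex surface near an embedded singular point stays, up to reorientation, on one side of a tangent hyperplane at $p_0$, so all limiting normals point into one half-space.

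Next I would pick a unit vector $\xi\in\S^2$ lying strictly outside the closed region determined by $\sigma$ (equivalently, $\met{v}{\xi}>0$ for all $v\in\sigma$, after possibly replacing $\xi$ by $-\xi$), and let $M^2$ be the totally geodesic surface through $p_0$ orthogonal to $\xi$. Using the warped product models of Subsection \ref{subsec2} built with respect to this $M^2$, the vertical direction $\parc_z$ at $p_0$ is exactly $\xi$, and the angle function $\nu=\met{N}{\parc_z}$ of $\Sigma$ satisfies $\nu\to\met{v}{\xi}$ along any sequence $q_n\to q_0$ with $N(q_n)\to v$. By compactness of $\sigma$ and the choice of $\xi$, there is $c_0>0$ with $\nu\geq c_0>0$ on a punctured neighborhood of $p_0$; i.e. $\Sigma$ is uniformly non-vertical there, so $\pi:\Sigma\to M^2$ is a local diffeomorphism away from $p_0$. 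To upgrade this to the graph property — that $\pi$ restricted to a punctured neighborhood is injective onto a punctured disk in $M^2$ — I would use that $\Sigma$ is embedded around $p_0$ together with the local convexity: the tangent planes of $\Sigma$ are uniformly transverse to the fibers $\{q\}\times\R$, and by the Claim-style convexity argument in the proof of Theorem \ref{thap3} (the epigraph $\overline{\mathrm{epi}(z)}$ is convex), two points of $\Sigma$ on the same vertical fiber would force a vertical tangency or a self-intersection, contradicting either $\nu\geq c_0>0$ or embeddedness.

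The main obstacle I expect is the passage from "$\pi$ is a local diffeomorphism and $\Sigma$ is embedded" to "$\pi$ is globally a diffeomorphism onto a punctured disk" — a monodromy/covering argument is needed to rule out $\pi$ being a nontrivial cover of the punctured disk, and one must shrink the neighborhood of $p_0$ carefully so that the image is exactly a punctured geodesic disk and no part of $\Sigma$ "wraps around." Here I would argue that, since $\Sigma$ is embedded and locally strictly convex with a uniform lower bound $\nu\geq c_0$, a small enough neighborhood of the singular point maps to a punctured disk as a connected cover; an embedded locally convex surface over such a disk with uniformly bounded slope, and with the singular set collapsing to the single point $p_0$, must be a single-sheeted graph — otherwise distinct sheets would meet (violating embeddedness) or produce an interior branch point (impossible since $\pi$ is a local diffeomorphism). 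Once injectivity of $\pi$ on a small punctured disk is established, $\Sigma$ is by definition a geodesic graph $z=z(x,y)$ over $M^2$ in the corresponding warped coordinates, which is exactly the assertion of the Lemma.
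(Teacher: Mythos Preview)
Your approach is genuinely different from the paper's. The paper does not argue directly: it simply cites \cite[Theorem 13]{GaMi} for the case $\M^3=\R^3$, and then for $\H^3$ and $\S^3$ uses the classical totally geodesic (projective) embeddings of $\H^3$ and of a hemisphere $\S^3_+$ into $\R^3$. Since these maps send geodesics to straight lines, they preserve both ``embedded isolated singularity of positive extrinsic curvature'' and ``geodesic graph over a totally geodesic surface'', so the $c\neq 0$ cases reduce immediately to the Euclidean one. That is the entire proof.

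Your direct route has a real gap at the very first step. The claim that the limit unit normal $\sigma$ lies in an open hemisphere is precisely the substantive content of the lemma, and you do not prove it; you write ``one expects'' and point to Theorem~\ref{regama} as an analogue. But Theorem~\ref{regama} already presupposes a singular solution in the sense of Definition~\ref{def: bounded}, i.e.\ a graph with bounded gradient --- the hemisphere conclusion is built into its hypotheses, not derived from them. Likewise, your appeal to the epigraph convexity in the proof of Theorem~\ref{thap3} is circular: that argument starts from a graph $z(x,y)$ in a warped product, which is exactly what you are trying to establish. The passage ``a locally strictly convex surface near an embedded singular point stays on one side of a tangent hyperplane at $p_0$'' is, in effect, the statement of \cite[Theorem 13]{GaMi}, and proving it requires a nontrivial argument (in $\R^3$ it uses the global structure of embedded locally convex surfaces). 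Only after that is secured does your angle-function bound $\nu\geq c_0>0$ make sense; and even then the monodromy step you flag as the ``main obstacle'' still needs a clean argument --- ruling out a nontrivial cover of the punctured disk does not follow just from embeddedness plus $\nu\geq c_0$. In short: the paper sidesteps all of this by reducing to the already-proved Euclidean case via projective models, which is both shorter and avoids the circularity in your outline.
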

\begin{proof}
If $c=0$ (i.e. $\M^3=\R^3$), the result was proved in \cite[Theorem
13]{GaMi}. When $c\neq 0$ (i.e. $\M^3= \S^3$ or $\H^3$), the result
can be reduced to the $c=0$ case. Indeed, recall first of all the
classical result that there exist totally geodesic embeddings from
$\H^3$ and the hemisphere $\S_+^3$ into $\R^3$. These embeddings
preserve geodesics and convexity. In particular, they preserve the
properties of being an embedded isolated singularity of positive
extrinsic curvature, and of being a geodesic graph over some totally
geodesic surface. Therefore, the result in $\H^3$ and $\S^3$ follows
from the result in $\R^3$.
\end{proof}

The previous lemma shows that $\Sigma$ can be viewed around $p_0$ as
a graph $z=z(x,y)$ with respect to a system of coordinates $(x,y,z)$
of $\M^3$ as in \eqref{cilimodel}, \eqref{corestres} or
\eqref{corr3} (depending on whether $c=-1$, $c=1$ or $c=0$,
respectively). We may also assume that the singularity $p_0$
corresponds to $(0,0,0)$ in these coordinates.

We next establish how to relate explicitly the limit unit normal
$\sigma\subset \S^2$ of $\Sigma$ at $p_0$ with the limit gradient
$\gamma\subset \R^2$ of $\Sigma$ at the origin, when $\Sigma$ is
viewed as a graph $z=z(x,y)$ as explained above. Specifically, with
the terminology above, we prove:

\begin{lema}\label{lemonzero}
$\sigma\subset \S^2$ is a regular analytic Jordan curve of
non-vanishing geodesic curvature if and only if $\gamma\subset \R^2$
is so.
\end{lema}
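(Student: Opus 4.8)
The plan is to make the relationship between the limit unit normal $\sigma\subset\S^2$ and the limit gradient $\gamma\subset\R^2$ completely explicit, via the Gauss map of the graph $z=z(x,y)$ written with respect to the warped coordinates \eqref{cilimodel}, \eqref{corestres} or \eqref{corr3}. In each of the three models the ambient metric at the origin $(0,0,0)$ is the Euclidean one, so the unit normal of $\Sigma$ at a point near $p_0$, evaluated in the limit as we approach the singularity along a sequence, is obtained by normalizing the (non-unit) upward-pointing normal $\eta=-z_x\,\partial_x-z_y\,\partial_y+\partial_z$ (recall $f(0)\lambda(0,0)=1$ in each model). Concretely, I would show that if a sequence $q_n\to p_0$ has $(z_x,z_y)(q_n)\to(a,b)\in\gamma$, then $N(q_n)\to \frac{1}{\sqrt{1+a^2+b^2}}(-a,-b,1)\in\S^2$, and conversely. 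Thus $\sigma$ is the image of $\gamma$ under the diffeomorphism
$$\Psi:\R^2\flecha \S^2_+,\qquad \Psi(a,b)=\frac{(-a,-b,1)}{\sqrt{1+a^2+b^2}},$$
which is a global diffeomorphism onto the open upper hemisphere (stereographic-type projection from the south pole). Since $\Psi$ is a real-analytic diffeomorphism, it carries regular analytic Jordan curves to regular analytic Jordan curves, and vice versa; this handles every part of the statement except the geodesic-curvature condition.

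The main work, then, is to show that $\Psi$ and $\Psi^{-1}$ preserve the \emph{non-vanishing} of geodesic curvature. I would first reduce to a conformal statement: the upper hemisphere $\S^2_+$ with its round metric and $\R^2$ with a conformal metric $\mu(a,b)(da^2+db^2)$ are related by $\Psi$, and in fact $\Psi$ is a conformal map when $\R^2$ is given the metric of $\S^2_+$ pulled back, i.e.\ $\Psi$ is (essentially) the inverse stereographic projection, hence conformal as a map between the round sphere and the standard plane up to the conformal factor. A conformal change of metric on a surface changes the geodesic curvature of a curve by $\kappa_{\tilde g}=e^{-\varphi}(\kappa_g + \partial_n\varphi)$ along the curve; this does not in general preserve the sign of $\kappa$. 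So the correct and cleaner route is geometric: the curve $\gamma\subset\R^2$ is strictly convex (no vanishing curvature) if and only if, for every point $p\in\gamma$, the intersection of $\gamma$ with its Euclidean tangent line at $p$ is exactly $\{p\}$ locally, and moreover $\gamma$ is globally convex — but in fact Theorem \ref{regama} already gives us that $\gamma$ is a convex Jordan curve meeting each tangent line only at the point of tangency. The right statement to prove is that a regular analytic Jordan curve on a surface has non-vanishing geodesic curvature if and only if it is locally supported (on one side) by its tangent geodesic at each point, and this property is preserved by any diffeomorphism of surfaces that maps geodesics of the source that are tangent to the curve onto curves lying locally on one side — which $\Psi$ does because $\Psi$ maps straight lines in $\R^2$ (geodesics of the flat metric) to round circles/arcs in $\S^2$, and "convex from one side" is a projective-type incidence notion that $\Psi$ respects.

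More precisely, I would argue as follows. A regular analytic curve $\beta$ on an analytic surface $(S,h)$ has nowhere-vanishing geodesic curvature if and only if, for each $p\in\beta$, writing $\beta$ in a neighborhood of $p$ as a graph over the geodesic $g_p$ tangent to $\beta$ at $p$ (in Fermi/normal coordinates along $g_p$), the graphing function has a strict local extremum at $p$ — equivalently, $\beta$ stays, near $p$, strictly on one side of $g_p$. This is a local, second-order condition and is exactly the condition "the signed distance from $\beta$ to $g_p$ has a nondegenerate critical point at $p$", whose failure is equivalent to $\kappa_g(p)=0$ by the standard formula relating geodesic curvature to the second derivative of the support function. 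Now I claim this one-sided property is preserved under $\Psi$: if $\ell$ is the Euclidean tangent line of $\gamma$ at $q$ and $\gamma$ lies locally on one side of $\ell$, then $\Psi(\ell)$ is the great circle of $\S^2$ tangent to $\sigma=\Psi(\gamma)$ at $\Psi(q)$ (because $\Psi$ is a diffeomorphism taking lines to great circles and preserving tangency), and $\Psi$, being a homeomorphism, maps "locally on one side of $\ell$" to "locally on one side of $\Psi(\ell)$". Hence $\kappa_{\text{geod}}(\sigma)(\Psi(q))\neq 0$. The converse is identical using $\Psi^{-1}$ (which takes great circles through the appropriate region to lines, since $\Psi^{-1}$ is inverse stereographic composed with a reflection). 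The one point needing care — and the main obstacle — is to check that $\Psi$ genuinely sends \emph{lines} to \emph{round circles} and preserves the tangency and one-sidedness data at the relevant points; this is the classical fact that (inverse) stereographic projection is a conformal circle-preserving map, together with the observation that $\gamma\subset\mathcal{H}$ is bounded (by Theorem \ref{regama} it is a compact Jordan curve), so all the relevant tangent lines stay in the region where $\Psi$ is a diffeomorphism onto its image inside the open hemisphere. Once this is in place, combining it with the fact that $\Psi$ and $\Psi^{-1}$ are analytic diffeomorphisms of the respective regions gives the full equivalence: $\sigma$ is a regular analytic Jordan curve of non-vanishing geodesic curvature if and only if $\gamma$ is.
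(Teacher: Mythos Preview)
Your identification of the map
\[
\Psi(a,b)=\frac{(-a,-b,1)}{\sqrt{1+a^2+b^2}}
\]
relating $\gamma$ and $\sigma$ is correct and matches the paper's formula \eqref{rellim}. However, there are two genuine problems with the argument you build on top of it.

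First, $\Psi$ is \emph{not} stereographic projection; it is (up to the reflection $(a,b)\mapsto(-a,-b)$) the \emph{gnomonic} (central) projection of the plane $\{z=1\}$ onto the upper hemisphere. Gnomonic projection sends straight lines to great circles, which is what you actually use, but it is \emph{not} conformal and does not come from the stereographic circle-preserving picture you invoke. So the conformal remarks and the appeal to stereographic projection should be removed; only the ``lines $\leftrightarrow$ great circles'' feature of gnomonic projection is relevant.

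Second, and more seriously, your key equivalence is false. You claim that $\kappa_g(p)\neq 0$ is equivalent to $\beta$ staying locally strictly on one side of its tangent geodesic $g_p$, and you identify this with the signed distance having a \emph{nondegenerate} critical point. But ``strictly on one side'' only means the signed distance has a \emph{strict local extremum}, which can be degenerate (e.g.\ $y=x^4$ at $0$ in $\R^2$ has $\kappa=0$ yet lies strictly above its tangent line). Since a homeomorphism preserves only the ``one side'' property, your argument as written would allow $\gamma$ to have an isolated point with $\kappa=0$ while $\sigma$ does not, or vice versa.

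The fix is close to what you wrote but uses a different invariant: $\kappa_g(p)=0$ is equivalent to $\beta$ having \emph{second-order contact} with $g_p$ at $p$. A diffeomorphism that carries geodesics to geodesics (as unparametrized curves), such as gnomonic projection between $(\R^2,\text{flat})$ and $(\S^2_+,\text{round})$, preserves second-order contact with geodesics, hence preserves the vanishing set of $\kappa_g$. This yields the equivalence cleanly. Alternatively (and this is what the paper does), one simply computes directly: writing $\sigma(u)$ via \eqref{cor2cur}, one checks that $\langle \sigma'',\sigma\times\sigma'\rangle$ and $\alpha'\beta''-\alpha''\beta'$ are related by a positive factor, so their signs and zeros coincide. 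The paper treats this as a well-known computation and does not spell it out.
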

\begin{proof}
Consider the model $(\Omega_c,\esiz,\esde)\equiv \M^3(c)$ for $\M^3$
explained in Subsection \ref{subsec1}.

Let $\Psi: \Omega_c\subset \R^3\flecha \R^3$ be the map defining the
change of coordinates from $(x_1,x_2,x_3)$ to coordinates $(x,y,z)$
as in \eqref{cilimodel}, \eqref{corestres} or \eqref{corr3}, for
which $\Sigma$ is a graph $z=z(x,y)$. By writing $\Psi
(x_1,x_2,x_3)=(x,y,z)$ we observe that $\Psi (0,0,0)=(0,0,0)$, and
that there exists a positively oriented orthonormal basis
$\{e_1,e_2,e_3\}$ of $T_{(0,0,0)} \Omega_c \equiv \R^3$ such that
\begin{equation}\label{changeform}
d\Psi_{(0,0,0)} (e_1)=(1,0,0), \hspace{0.5cm} d\Psi_{(0,0,0)}
(e_2)=(0,1,0) \hspace{0.5cm} d\Psi_{(0,0,0)} (e_3)=(0,0,1).
 \end{equation}

Note that in the $(x,y,z)$ coordinates $\Sigma$ is a graph
$z=z(x,y)$ with an isolated singularity at the origin. Also, observe
that the warped metric in \eqref{cilimodel} (resp.
\eqref{corestres}, \eqref{corr3}) can be written as
 \begin{equation}\label{metc}
f(z) \landa(x,y) (dx^2+dy^2) + dz^2
 \end{equation}
for adequate positive functions $f(z)$, $\landa(x,y)$ depending on
the value of $c$. From here and with this notation, a computation
shows that the unit normal $N$ to $\Sigma$ in these coordinates is
given by

\begin{equation}\label{norxyz}
N(x,y)= \frac{(-z_x,-z_y, f \landa)}{\sqrt{f^2 \landa^2 + f \landa
(z_x^2+z_y^2)}}.
\end{equation}

From this formula, and since $f(0)=\landa(0,0)=1$, we clearly see
that a vector $w=(w_1,w_2)\in \R^2$ is contained in the limit
gradient $\gamma\subset \R^2$ of $z(x,y)$ at the singularity if and
only if the vector given in coordinates $(x,y,z)$ by
$$\frac{1}{\sqrt{1+w_1^2+w_2^2}} \left(-w_1,-w_2,1\right)$$ is contained
in the limit unit normal of $\Sigma$ at the singularity. Using now
\eqref{changeform} we deduce that the limit unit normal
$\sigma\subset \S^2$ in the original $(x_1,x_2,x_3)$ coordinates for
$\M^3$ is given by the set of points $a\in \S^2$ of the form
 \begin{equation}\label{rellim}
a= \frac{1}{\sqrt{1+w_1^2+w_2^2}} \left(-w_1 e_1 -w_2 e_2 +
e_3\right),
 \end{equation}
where $w=(w_1,w_2)$ is a point of the limit gradient $\gamma\subset
\R^2$.

Finally, it is well known (and also easy to prove by a direct
computation) that a curve $\gamma(t)=(\alfa(t),\beta(t))$ in $\R^2$
is regular and has positive (resp. negative) geodesic curvature if
and only if the curve $\sigma(t)\subset \S^2$ given for some
positively oriented orthonormal basis $\{e_1,e_2,e_3\}$ of $\R^3$ by
 \begin{equation}\label{cor2cur}
 \sigma (u)=
\frac{1}{\sqrt{1+\alfa(u)^2 + \beta(u)^2}} \left( -\alfa(u) e_1 -
\beta (u) e_2 + e_3\right)
 \end{equation}
 is regular and has positive (resp. negative) geodesic
curvature in $\S^2$. This fact together with the relation
\eqref{rellim} proves Lemma \ref{lemonzero}.

\end{proof}
\subsection{Proof of the classification theorem}\label{subsec4}

In this subsection we will prove Theorem \ref{teo: embedded}, i.e.
we will show that the map which sends each $\Sigma\in \cA_1$ to its
limit unit normal $\sigma\subset \S^2$ is a bijective correspondence
between $\cA_1$ and $\cA_2$.

To start, let $\Sigma\in \cA_1$, i.e. $\Sigma$ is a canonically
oriented embedded isolated singularity in $\M^3$ of prescribed
analytic curvature $\cK>0$. We assume that the singularity $p_0$ is
placed at $(0,0,0)$ in the canonical coordinates $(x_1,x_2,x_3)$ for
$\M^3$ explained in Subsection \ref{subsec1}, and we denote by
$\sigma$ the limit unit normal to $\Sigma$ at the singularity.

The key point of the proof is the following claim:

\begin{claim}\label{gammareg3}
In the conditions above, $\sigma$ is a regular, analytic, strictly
convex Jordan curve in $\S^2$, i.e. $\sigma\in \cA_2$.
\end{claim}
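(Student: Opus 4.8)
The plan is to reduce Claim \ref{gammareg3} to the analytic results of Sections \ref{newsec3}--\ref{secgrad} through the warped product description of $\Sigma$. By the Lemma above, $\Sigma$ is, around $p_0$, a geodesic graph $z=z(x,y)$ over a totally geodesic surface of $\M^3$, i.e. it is written in coordinates $(x,y,z)$ as in \eqref{cilimodel}, \eqref{corestres} or \eqref{corr3} (according to $c=-1,1,0$), with $p_0=(0,0,0)$ and the metric of $\M^3$ taking the warped form \eqref{metc} for real analytic positive functions $f(z)$ and $\landa(x,y)$. Since the extrinsic curvature at $(x,y,z(x,y))$ equals $\cK$, the function $z$ solves the elliptic Monge-Amp\`ere equation \eqref{general} with coefficients $A,B,C,E$ given by \eqref{curvwarp} (with $K_{\text{ext}}$ replaced by the analytic function $\cK(x,y,z,p,q)>0$), the ellipticity being \eqref{ellip}. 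All the coefficients are real analytic on the relevant open set $\cU\subset\R^5$.

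Next I would verify that $z$ is a \emph{singular solution} of this equation in the sense of Definition \ref{def: bounded}. That $z$ fails to be $C^1$ at the origin is immediate: otherwise its graph would extend as a $C^1$ surface and $\psi$ would extend $C^1$-smoothly, against hypothesis. For the compactness of $\overline{\mathfrak{H}}$ inside $\cU$ one uses the geometric input of Section \ref{sec:1}: $\Sigma$, being an embedded isolated singularity, extends continuously and hence is bounded around $p_0$, so Theorem \ref{thap1} applies and $\Sigma$ is uniformly non-vertical; by \eqref{norxyz} this forces $z_x,z_y$ to be bounded on the punctured disc, and together with the continuity of $z$ at the origin it follows that $\overline{\mathfrak{H}}$ is a compact subset of the region where $\cK>0$ and $f\landa>0$, i.e. of $\cU$.

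Then I would check that the coefficients \eqref{curvwarp} satisfy the condition $(\star)$ of Lemma \ref{lem: estrella}. Since $\landa$ depends only on $(x,y)$ and $f$ only on $z$, a direct computation from \eqref{curvwarp} gives $A_p=\landa_x/(2\landa)$, $C_q=\landa_y/(2\landa)$, $A_q+2B_p=\landa_y/(2\landa)$ and $C_p+2B_q=\landa_x/(2\landa)$, and none of these depends on $p$ or $q$. Hence Lemma \ref{lem: estrella} shows that $z$ satisfies the HeB-condition, so all the hypotheses of Theorem \ref{regama} are met: the limit gradient $\gamma\subset\R^2$ of $z$ at the origin is a regular, analytic, strictly convex Jordan curve (bounding a compact strictly convex region, as in the proof of Theorem \ref{regama}). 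Finally, Lemma \ref{lemonzero} transfers this to the limit unit normal, so $\sigma\subset\S^2$ is a regular, analytic, strictly convex Jordan curve, i.e. $\sigma\in\cA_2$, which proves Claim \ref{gammareg3}.

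The genuine difficulty here is not any isolated step but the fact that in $\H^3$ and $\S^3$ the prescribed curvature equation is \emph{not} a pure Monge-Amp\`ere equation ($\cA\neq 0$ in \eqref{eq0}), so the analysis of \cite{GJM2} does not apply directly; the whole point is that the warped product models produce an equation whose coefficients satisfy $(\star)$, which is exactly what makes the HeB-condition automatic and brings us back within reach of Theorem \ref{regama}. Establishing that $z$ is a \emph{singular} — not merely $C^2$ — solution, via the boundedness of the gradient proved geometrically in Theorem \ref{thap1}, is the other input that has to be in place before the analytic machinery can be invoked.
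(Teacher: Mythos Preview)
Your reduction to the Monge--Amp\`ere framework is correct: the graph $z=z(x,y)$ satisfies \eqref{general}--\eqref{curvwarp}, the coefficients obey condition $(\star)$ (your computation $A_p=\landa_x/(2\landa)$ etc.\ is fine), so Lemma \ref{lem: estrella} gives the HeB-condition and Theorem \ref{regama} applies. The gap is in what you claim Theorem \ref{regama} gives you.

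Theorem \ref{regama} only asserts that the limit gradient $\gamma$ is a \emph{regular, convex} analytic Jordan curve; the Remark right after it emphasizes that the curvature of $\gamma$ ``can vanish at isolated points'' (with an explicit example in \cite[Remark 3]{GJM2}). Claim \ref{gammacon} inside that proof says $\gamma$ bounds a compact strictly convex \emph{set}, but this is a weaker notion than $\gamma$ being a strictly convex \emph{curve} in the sense of the definition used for $\cA_2$ (non-vanishing geodesic curvature). And Lemma \ref{lemonzero} is an equivalence between \emph{non-vanishing} curvature of $\gamma$ and \emph{non-vanishing} curvature of $\sigma$, so it cannot upgrade mere convexity to strict convexity.

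What you are missing is precisely the part of the paper's proof that produces strict convexity. The paper writes the first and second fundamental forms of the analytic parametrization $\psi(u,v)$ as in \eqref{fundamxx}, extracting a real analytic function $\omega$ with $\omega>0$ on $\Gamma_R$ and $\omega(u,0)=0$; the Gauss--Codazzi equations then yield the elliptic PDE \eqref{pdebob} (equivalently \eqref{laplaom}) for $\omega$, and the Hartman--Wintner nodal theorem forces $\omega_v(u,0)\neq 0$ for every $u$ (otherwise a nodal branch would enter the region where $\omega>0$). Formula \eqref{omegapos} then identifies $\omega_v(u,0)$ with a nonzero multiple of the geodesic curvature of $N(u,0)$, which is what makes $\sigma$ \emph{strictly} convex. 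This argument is not a cosmetic addition: it uses the specific geometric structure of the prescribed-curvature problem (the conformality of $II$ with $ds^2$ and the Gauss--Codazzi system), information that is simply absent from the purely analytic statement of Theorem \ref{regama}.
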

\begin{proof}[Proof of Claim \ref{gammareg3}]

As explained in Subsection \ref{subsec3}, the surface $\Sigma$ can
be seen around $(0,0,0)$ as a graph $z=z(x,y)$ with an isolated
singularity at the origin with respect to the coordinates $(x,y,z)$
in \eqref{metc}. Moreover, as $\Sigma$ has prescribed extrinsic
curvature $\cK$, it follows that $z(x,y)$ satisfies the elliptic
equation of Monge-Ampère type given by
\eqref{general}-\eqref{curvwarp} with $K_{\ext}= \cK(x,y,z(z,y))>0$.

Besides, observe that in equation \eqref{general}-\eqref{curvwarp},
the functions  $A_p$, $A_q+2B_p$, $C_p+2B_q$ and $C_q$  do not
depend on $p$ and $q$. By Lemma \ref{lem: estrella}, this implies
that $z(x,y)$ satisfies the HeB-condition in Definition \ref{HeB}.
Hence, we are in the conditions to use our analytic study of Section
\ref{secgrad}. In particular, we can parametrize $\Sigma$ around
$(0,0,0)$ with respect to the coordinates $(x,y,z)$ as an analytic
map

\begin{equation}\label{mappsi}
\psi(u,v)=(x(u,v),y(u,v),z(u,v))
\end{equation}
defined on a quotient strip $\Gamma_R=\{w\in \C : 0 < {\rm Im} w <
R\}/(2\pi \Z)$, so that $\psi$ extends analytically to $\R$ with
$\psi(u,0)=(0,0,0)$. Here, the coordinates $(u,v)$ are conformal for
the Riemannian metric $\ep ds^2$ given by \eqref{metrica} in terms
of the coefficients $A,B,C$ in \eqref{curvwarp}.

From the expression \eqref{metc} for the metric of $\M^3$ in the
$(x,y,z)$ coordinates we see that the unit normal of $\psi$ is given
in these coordinates by

 \begin{equation}\label{unitnormal}
 N(u,v)=\frac{(-p(u,v),-q(u,v),f\lambda)}{\sqrt{f^2\lambda^2+f\lambda(p(u,v)^2+q(u,v)^2)}},
 \end{equation}
where $f(u,v):= f(z(u,v))$ and $\landa(u,v):=\landa(x(u,v),y(u,v))$.

By Lemma \ref{extanalitica}, $\psi,N$ are $C^{\omega}$ in $
\Gamma_R\cup \R$. Moreover, as $f(u,0)=\landa(u,0)=1$ we see that
$$N(u,0)=\frac{(-\alfa(u),-\beta(u),1)}{\sqrt{1+\alfa(u)^2+\beta(u)^2}},$$ where we are denoting
$\alfa(u)=p(u,0)$, $\beta(u)=q(u,0)$. In particular, as the limit
gradient at the singularity is given by
$\gamma(u)=(\alfa(u),\beta(u))$, it follows from Theorem
\ref{gammareg} that $N(u,0)$ is a $2\pi$-periodic regular analytic
curve in $\{(x,y,z): x^2+y^2+z^2=1\}$.

At this point, a straightforward computation using the expression
\eqref{unitnormal} for the unit normal of $\psi(u,v)$ shows that the
metric $\ep ds^2$ is conformally equivalent to the second
fundamental form $II$ of $\psi(u,v)$. Note that $II$ is positive
definite since $\Sigma$ is canonically oriented. Thus, if we write
$w=u+iv$, we can express the first and second fundamental forms of
$\psi(u,v)$ as
\begin{equation}\label{fundamx}
\left\{\def\arraystretch{1.5} \begin{array}{rrc} I=\esiz d\psi,d\psi
\esde& =& Q \, dw^2 + 2\mu |dw|^2 +
\overline{Q} d\overline{w}^2, \\
II=-\esiz d\psi,dN\esde&=& 2\rho |dw|^2,
\end{array}\right.
\end{equation}
where $Q:\Gamma_R\cup \R\flecha \C$ is given by $Q:= \esiz
\psi_w,\psi_w\esde$ (here we are denoting
$\parc_w:=(\parc_u-i\parc_v)/2$), and $\mu,\rho :\Gamma_R\cup \R
\flecha (0,\8)$ are a pair of positive real functions. Note that
$Q,\mu,\rho$ are $C^{\omega}$ in $\Gamma_R\cup \R$ since $\psi,N$
have that property.

By \eqref{ellip}, the  extrinsic curvature $K_{\rm ext}$ of
$\psi(u,v)$ is $$K_{\rm ext}(u,v)=\frac{\cD\circ {\bf z}
(u,v)}{(f\lambda+p(u,v)^2+q(u,v)^2)^2},$$ where ${\bf z}(u,v)$ is
given by \eqref{zneg}. In particular, $K_{\rm ext}(u,v)$ extends
analytically as a positive function to $\Gamma_R\cup \R$.

Denote $K:= K_{\rm ext} (u,v)$, and let $\times$ be the exterior
product associated to the warped metric $\langle,\rangle$ in
\eqref{metc}. Then, a direct calculus using \eqref{unitnormal} shows
that

\begin{equation}\label{provex}
N\times N_u = -\sqrt{K} \psi_v, \hspace{1cm} N\times N_v = \sqrt{K}
\psi_u,
\end{equation}
hold in $\Gamma_R\cup \R$. Therefore,
\begin{equation}\label{forQ} \def\arraystretch{2}\begin{array}{lll}
Q(u,0)&=&\displaystyle\frac{1}{4}\left(\esiz \psi_u,\psi_u\esde -
\esiz \psi_v,\psi_v \esde -2 i \esiz \psi_u,\psi_v\esde\right)(u,0)
\\ & =& -\displaystyle\frac{1}{4}\esiz\psi_v,\psi_v\esde (u,0)=
\displaystyle\frac{-1}{4K}\esiz N\times N_u,N\times N_u\esde (u,0)
\\ & = & \displaystyle\frac{-1}{4K} \esiz N_u,N_u\esde (u,0).
\end{array}\end{equation}
Recall that $N(u,0)$ is a regular curve. Then, by choosing a smaller
$R>0$ if necessary, we may assume that $Q$ never vanishes on
$\Gamma_R\cup \R$. Hence, since $K={\rm det} (II)/{\rm det (I)}$ on
$\Gamma_R$, we obtain from \eqref{fundamx} that
\begin{equation}\label{roca}
\rho^2=K(\mu^2-|Q|^2) \hspace{1cm} \text{ in } \Gamma_R\cup \R.
\end{equation}
This shows the existence of a real analytic function $\omega$ in
$\Gamma_R\cup \R$ such that $\mu = |Q|\cosh \omega$ and
$\rho=\sqrt{K} |Q| \sinh \omega$. We observe that $\omega>0$ on
$\Gamma_R$ and $\omega(u,0)=0$ for every $u\in \R$. In particular,
\eqref{fundamx} can be rewritten as
\begin{equation}\label{fundamxx}
\left\{\def\arraystretch{1.5} \begin{array}{rrc} I=\esiz d\psi,d\psi
\esde& =& Q \, dw^2 + 2|Q|\cosh \omega |dw|^2 +
\overline{Q} d\overline{w}^2, \\
II=-\esiz d\psi,dN\esde&=& 2\sqrt{K}|Q|\sinh\omega |dw|^2.
\end{array}\right.
\end{equation}
If $c=0$, it was shown in \cite{Bob}, pp. 118-119 (see also
\cite{GJM2}) that the Gauss-Codazzi equations for $\psi$ imply that
the function $\omega$ satisfies

\begin{equation}\label{pdebobx}
\omega_{w\overline{w}} + U_{\overline{w}} -V_{w} + K |Q| \sinh \omega =0,
\end{equation}
where
 \begin{equation}\label{uveq}
U=\frac{-K_{\overline{w}} Q}{4K |Q|} \sinh \omega, \hspace{1cm}
V=\frac{K_{ w} \overline{Q}}{4K |Q|} \sinh \omega.
 \end{equation}
When $c\neq 0$ the Codazzi equation associated to \eqref{fundamxx}
does not vary, while the Gauss equation gives $K_{\ext} +c = K_{G}$
where $K_{G}$ is the Gaussian curvature of $I$. From here and
\eqref{pdebobx}, we easily see that for any $c\in \{-1,0,1\}$, the
function $\omega$ verifies

\begin{equation}\label{pdebob}
\omega_{w\overline{w}} + U_{\overline{w}} -V_{w} + (K+\varepsilon) |Q| \sinh
\omega =0,
\end{equation}
where $U,V$ are given by \eqref{uveq}. Once here, we see that
\eqref{pdebob} is an elliptic PDE of the type
 \begin{equation}\label{laplaom}
\Delta \omega +a_1\, \omega_u \cosh \omega +  a_2 \, \omega_v \cosh
\omega + a_3\sinh \omega =0,
\end{equation}
where $a_i=a_i(u,v)\in C^{\omega}(\Gamma_R\cup \R)$. Also, note that
$\omega=0$ is a solution to \eqref{laplaom}.

Denote $\eta(u):=N(u,0)$, and observe that the exterior product
$\times$ of $\M^3$ at the origin is just the usual vector product of
$\R^3$ in $(x,y,z)$ coordinates, since the metric \eqref{metc} at
the origin is written as $dx^2+dy^2+dz^2$. Then, by \eqref{provex},
\eqref{forQ},

$$\def\arraystretch{1.5}\begin{array}{lll}
\esiz \eta'',\eta\times \eta'\esde & = & \esiz N_{uu}, N\times
N_u\esde (u,0)= \sqrt{K} \esiz N\times \psi_{uv},N\times N_u\esde
(u,0) \\ & =& \sqrt{K} \esiz \psi_{uv},N_u\esde (u,0)= \sqrt{K}
\left( \frac{\parc}{\parc v} (\esiz \psi_u,N_u\esde ) - \esiz
\psi_u,N_{uv}\esde \right) (u,0) \\ &=& \sqrt{K} \frac{\parc}{\parc
v} (\esiz \psi_u,N_u\esde) (u,0) = - 2 K |Q|  \omega_v \cosh \omega
(u,0)= -2 K |Q|  \omega_v (u,0) \\ &=& -\frac{1}{2} \esiz
\eta',\eta'\esde \omega_v (u,0).
\end{array}$$
Therefore, \beq\label{omegapos}\omega_v(u,0) = -\frac{2\esiz
\eta''(u),\eta(u)\times \eta'(u)\esde}{\esiz
\eta'(u),\eta'(u)\esde}.\eeq  Let us recall at this point that the
real axis is a nodal curve of $\omega$. Since $\omega$ is a solution
to the elliptic PDE \eqref{laplaom}, by Theorem $\dag$ in
\cite{HaWi} we deduce that, at the points $(u,0)$ where $\omega_v
(u,0)= 0$ there exists at least one nodal curve of $\omega$ that
crosses the real axis at a definite angle. But this situation is
impossible, since $\omega>0$ in $\Gamma_R$. Therefore we see that
$\omega_v (u,0)> 0$ for every $u$, and so by \eqref{omegapos} we see
that $\eta(u)$ is a regular, analytic Jordan curve in $\S^2$ of
strictly negative geodesic curvature at every point.

But now, observe that $\eta(u)=N(u,0)$ is simply the expression with
respect to the coordinates $(x,y,z)$ of the limit unit normal of
$\Sigma$ at the singularity. Then, as explained in Subsection
\ref{subsec3}, the limit unit normal $\sigma\subset \S^2$ in the
canonical initial coordinates $(x_1,x_2,x_3)$ of $\M^3$ is given by
$$\sigma(u)= \eta_1 (u) e_1 + \eta_2 (u) e_2 + \eta_3 (u) e_3$$ for some positively oriented
orthonormal basis $\{e_1,e_2,e_3\}$ of $\R^3$. Thus, $\sigma(u)$ is
also a regular analytic Jordan curve in $\S^2$ of strictly negative
geodesic curvature at every point.  This proves Claim
\ref{gammareg3}.
\end{proof}

Claim \ref{gammareg3} shows that the mapping sending each $\Sigma\in
\cA_1$ to its limit unit normal $\sigma\subset \S^2$ is a well
defined mapping from $\cA_1$ to $\cA_2$. So, in order to prove
Theorem \ref{teo: embedded} it remains to check that this map is
bijective.

Surjectivity is a consequence of \cite[Corollary 1]{GJM2}, as
follows.

Consider $\sigma\in \cA_2$, parametrized as a regular, analytic
$2\pi$-periodic curve $\sigma(u):\R/2\pi \Z\flecha \S^2$ of negative
geodesic curvature. Let $\{e_1,e_2,e_3\}$ be a positively oriented
orthonormal frame
 of $\R^3$ for which $\sigma(u)$ can be written as
\eqref{cor2cur} for some $\alfa(u),\beta(u):\R/2\pi \Z \flecha \R$.
It follows then from the last part of the proof of Lemma
\ref{lemonzero} that $\gamma(u):= (\alfa(u),\beta(u))$ is a regular,
analytic $2\pi$-periodic curve of negative geodesic curvature in
$\R^2$.

Consider coordinates $(x,y,z)$ on $\M^3$ as in \eqref{metc}
associated to the basis $\{e_1,e_2,e_3\}$ (i.e. so that
\eqref{changeform} holds), and the elliptic equation of Monge-Ampère
type \eqref{general}-\eqref{curvwarp} with $K_{\ext}= \cK(x,y,z)>0$.
This equation has analytic coefficients. Therefore, item (4) in
\cite[Theorem 3]{GJM2} ensures that there is a solution $z(x,y)$ to
this equation with an isolated singularity at the origin, and whose
limit gradient at the singularity is the curve $\gamma$. Moreover,
$z_{xx}+C>0$ holds, which means that the second fundamental form of
the graph $z=z(x,y)$ is positive definite with respect to its usual
orientation. In other words, the graph $z=z(x,y)$ in $\M^3$ is a
canonically oriented embedded isolated singularity $\Sigma$ of
prescribed curvature $\cK$ around $(0,0,0)$. Moreover, it follows
then from the constructive procedure in the proof of Lemma
\ref{lemonzero} that the limit unit normal of $\Sigma$ at the
singularity is precisely the curve $\sigma\subset \S^2$ we started
with. This proves that the considered map $\cA_1 \to \cA_2$ is
surjective.

We only have left to prove injectivity. Let $\Sigma_1,\Sigma_2\in
\cA_1$ with the same limit unit normal $\sigma\subset \S^2$ at the
singularity $(0,0,0)$. By the results in Subsection \ref{subsec3} it
follows that there exist coordinates $(x,y,z)$ in $\M^3$ as in
\eqref{metc} so that $\Sigma_1,\Sigma_2$ are graphs $z=z_1(x,y),
z=z_2(x,y)$ with an isolated singularity at the origin, and with the
same limit gradient $\gamma$ at the singularity. As both $z_1,z_2$
satisfy the elliptic equation of Monge-Ampère type
\eqref{general}-\eqref{curvwarp} with $K_{\ext}= \cK(x,y,z)>0$, it
follows that both of them also satisfy the HeB-condition (by Lemma
\ref{lem: estrella}). Therefore, both $z_1,z_2$ are in the
conditions of our analysis in Section \ref{secgrad}. In particular,
we can consider for both $z_1,z_2$ the associated maps ${\bf z}_1
(u,v)$, ${\bf z}_2 (u,v)$ given by \eqref{zneg}. Note that $${\bf
z}_1 (u,0)= (0,0,0,\alfa_1(u),\beta_1(u)), \hspace{1cm} {\bf z}_2
(u,0)= (0,0,0,\alfa_2(u),\beta_2(u)),$$ where
$(\alfa_1(u),\beta_1(u))$ and $(\alfa_2(u),\beta_2(u))$ are regular
parametrizations of $\gamma$. Also note that we showed in our proof
above of the fact that the map $\cA_1\to \cA_2$ is well defined that
the parametrization of the limit gradient given by $(p(u,0),q(u,0))$
is oriented so that it has negative curvature. Thus, both
$(\alfa_i(u),\beta_i(u))$, $i=1,2$, have this property. So, up to an
orientation preserving reparametrization of one of them, which
simply means a $2\pi$-periodic conformal reparametrization of the
parameters $(u,v)$ in $\Gamma_R$ (by definition, the parameters
$(u,v)$ were defined up to this type of conformal
reparametrization), we get
$${\bf z}_1 (u,0)= {\bf z}_2 (u,0) =(0,0,0,\alfa(u),\beta(u)),$$
where $(\alfa(u),\beta(u))$ is a regular, analytic parametrization
of $\gamma$. It also follows from \eqref{d1} and \eqref{zfacil0}
that $({\bf z}_1)_v (u,0)= ({\bf z}_2)_v (u,0)$. Hence, both ${\bf
z}_1, {\bf z}_2$ are solutions to the analytic elliptic system
\eqref{mixto} with the same analytic initial conditions. By
uniqueness of the solution to the Cauchy problem for \eqref{mixto},
we get ${\bf z}_1 (u,v)= {\bf z}_2 (u,v)$. In particular, by looking
at the first three coordinates of this equality, we get that the
graphs $z=z_1(x,y)$ and $z=z_2(x,y)$ (i.e. suitable subsets of
$\Sigma_1$ and $\Sigma_2$) overlap on a neighborhood of the
singularity. This proves injectivity and finishes the proof of
Theorem \ref{teo: embedded}.

\begin{observacion}
Given a regular, analytic, strictly convex Jordan curve
$\sigma\subset \S^2$ and some analytic positive function $\cK$ on
$\M^3$, the proof of the surjectivity of the map $\cA_1\flecha
\cA_2$ in the proof of Theorem \ref{teo: embedded} actually provides
a construction process of the unique canonically oriented embedded
isolated singularity $\Sigma$ around $(0,0,0)\in \M^3$ with
prescribed extrinsic curvature $\cK$ which has $\sigma$ as its limit
unit normal at the singularity.
\end{observacion}

\end{document}